\newtheorem{theorem}{Theorem}\numberwithin{theorem}{section}
\newtheorem{definition}[theorem]{Definition}
\newtheorem{lemma}[theorem]{Lemma}
\newtheorem{corollary}[theorem]{Corollary}
\newtheorem{proposition}[theorem]{Proposition}
\theoremstyle{plain}
\theoremstyle{remark}
\newtheorem{remark}[theorem]{Remark}
\begin{document}

\title{On finite groups where the order of every automorphism is a cycle length}

\author{Alexander Bors\thanks{The author is supported by the Austrian Science Fund (FWF):
Project F5504-N26, which is a part of the Special Research Program \enquote{Quasi-Monte Carlo Methods: Theory and Applications}. \newline 2010 \emph{Mathematics Subject Classification}: 15A21, 20D15, 20D25, 20D45, 20D60. \newline \emph{Key words and phrases:} Finite groups, automorphisms, cycle structure}}

\date{}

\maketitle

\begin{abstract}
Using Frobenius normal forms of matrices over finite fields as well as the Burnside Basis Theorem, we give a direct proof of Horo\v{s}evski\u{\i}'s result that every automorphism $\alpha$ of a finite nilpotent group has a cycle whose length coincides with $\mathrm{ord}(\alpha)$. Also, we give two new sufficient conditions for an automorphism $\alpha$ of an arbitrary finite group to satisfy this property, namely when $\mathrm{ord}(\alpha)$ is a product of at most two prime powers or when $\alpha$ has a sufficiently large cycle. This will allow us to show that the least order of a group where this property is violated for an appropriate automorphism is $120$. Finally, we observe that any finite group embeds both into a finite group with this property (as all finite symmetric groups enjoy the property) as well as into a finite group not having this property.
\end{abstract}

\section{Motivation and some terminology}

We denote by $\mathbb{N}$ the set of natural numbers (including $0$) and by $\mathbb{N}^+$ the set of positive integers. For any set $X$, $\mathcal{S}_X$ denotes the symmetric group on $X$, and for a subset $M$ of the domain of a function $f$, we denote by $f[M]$ the pointwise image of $M$ under $f$. As a motivation for the notion studied in this paper, we point out the following concept:

\begin{definition}\label{fdsDef}
A \textbf{finite dynamical system} (abbreviated henceforth by FDS) is a finite set $X$ together with an \textbf{endofunction of $X$}, i.e., a function $f:X\rightarrow X$.
\end{definition}

FDSs have gained a lot of research interest in recent years, which is partially due to their great importance for practical applications, ranging from cryptography and pseudorandom number generation (see, for instance, \cite{GOS14a} and \cite{OS10a}) to reverse engineering (\cite{JLSS07a}). Especially for pseudorandom number generation, one requires certain properties of a periodic FDS (an FDS $(X,f)$ where $f\in\mathcal{S}_X$), which correspond to distribution properties of the pseudorandom sequence generated from it, see \cite{TW07a}. One necessary condition for an FDS to be of practical use in this respect is that a large portion of the elements of $X$ lie on \enquote{long} cycles of the permutation $f$. Also, computation of values of $f$ should, of course, be efficient, which can be ensured by equipping the set $X$ with an appropriate algebraic structure with respect to which $f$ is defined, the most intensely studied case being where $X$ is a Cartesian power $k^n$ of a finite field $k$ and $f$ a rational map $k^n\rightarrow k^n$. We are interested in the case where $X$ is endowed with a finite group structure and $f$ is a permutation of $X$ respecting that group structure, i.e., a group automorphism. In view of what was said above, we want to better understand the possible cycle structures of automorphisms of finite groups. It is now time to introduce the notion discussed in this paper, partially following the terminology from the recent paper \cite{GPS15a}:

\begin{definition}\label{terminologyDef}
(1) Let $X$ be a finite set, $\sigma\in\mathcal{S}_X$. A cycle of $\sigma$ whose length coincides with the order of $\sigma$ is called a \textbf{regular cycle of $\sigma$}, and if $\sigma$ has a regular cycle, we say that $\sigma$ (or the periodic finite dynamical system $(X,\sigma)$) satisfies the \textbf{regular cycle condition} (RCC).

(2) A finite structure $A$ belonging to a class $\mathcal{C}$ of structures such that all automorphisms of $A$, viewed as permutations of the underlying set, satisfy the RCC (we also speak of \textbf{RCC-automorphisms}, as opposed to \textbf{non-RCC-automorphisms}) is called an \textbf{RCC-$\mathcal{C}$-struc-}\textbf{ture} (examples of this terminology are \enquote{RCC-group} or \enquote{RCC-ring}; we may also say that \textbf{$A$ satisfies the RCC}). A $\mathcal{C}$-structure which is not an RCC-$\mathcal{C}$-structure is called a \textbf{non-RCC-$\mathcal{C}$-structure}.
\end{definition}

It is not difficult to see that a permutation $\sigma$ of a finite set $X$ satisfies the RCC if and only if all its cycle lengths divide the largest among them. This is what happens for all automorphisms of the most elementary examples of finite groups, such as finite cyclic groups. However, the class of finite RCC-groups consists of much more than just these. The first paper discussing the RCC in finite groups known to the author is \cite{Hor74a}, where Horo\v{s}evski\u{\i} (who spoke of \enquote{faithful cycles} instead of \enquote{regular cycles}) proved sufficiency of each of the following conditions for the RCC of an automorphism $\alpha$ of a finite group $G$:

(1) $G$ is nilpotent (Corollary 1 in \cite{Hor74a}).

(2) $G$ has no nontrivial normal solvable subgroups (Theorem 1 in \cite{Hor74a}).

(3) $\mathrm{ord}(\alpha)$ and $|G|$ are coprime (Corollary 2 in  \cite{Hor74a}).

Furthermore, he gave a series of examples of finite supersolvable non-RCC-groups. The aim of this paper is to further elaborate on the RCC in finite groups. In Section 2, we will give an alternative proof of the RCC in finite nilpotent groups. Section 3 provides two new sufficient conditions on the pair $(G,\alpha)$ for $\alpha$ to satisfy the RCC. As an application, we will prove in Section 4 that the least order of a finite non-RCC-group is $120$. We conclude by observing in Section 5 that every finite group embeds both into an RCC-group as well as into a finite non-RCC-group.

\section{On the RCC in finite nilpotent groups}

Recall that for a group $G$ and an automorphism $\alpha$ of $G$, a subgroup $H\leq G$ is called \textit{$\alpha$-admissible} if and only if $\alpha[H]=H$, and that if $N$ is an $\alpha$-admissible normal subgroup of $G$, then $\alpha$ induces a unique automorphism $\tilde{\alpha}$ on the quotient $G/N$ such that the following diagram commutes:

\begin{center}
\begin{tikzpicture}
\matrix (m) [matrix of math nodes, row sep=3em,
column sep=3em]
{ G & G \\
G/N & G/N \\
};
\path[->]
(m-1-1) edge node[above] {$\alpha$} (m-1-2)
(m-1-1) edge node[left] {$\pi$} (m-2-1)
(m-1-2) edge node[right] {$\pi$} (m-2-2)
(m-2-1) edge node[below] {$\tilde{\alpha}$} (m-2-2);
\end{tikzpicture}
\end{center}

For a characteristic subgroup $N$ of $G$, the function $\mathrm{Aut}(G)\rightarrow\mathrm{Aut}(G/F)$, mapping $\alpha$ to $\tilde{\alpha}$ as above, is a group homomorphism, whose kernel is denoted by $\mathrm{Aut}_N(G)$.

Horo\v{s}evski\u{\i}'s proof of the RCC in nilpotent groups is a consequence of the following, which is Theorem 3 in his paper \cite{Hor74a} and gives information on the action of non-RCC automorphisms in minimal examples. 

\begin{theorem}\label{horTheo1}
Let $\alpha$ be a non-RCC-automorphism of a finite group $G$ inducing RCC-automorphisms on every proper $\alpha$-admissible subgroup and on every quotient of $G$ by an $\alpha$-admissible subgroup. Also, assume that all proper powers of $\alpha$ satisfy the RCC. Then $\alpha$ acts identically on every $\alpha$-admissible nilpotent normal subgroup $H$ of $G$.\qed
\end{theorem}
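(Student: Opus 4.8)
The plan is to translate the RCC into a covering condition on $G$ and then contradict that condition using the nilpotent normal subgroup. First I record two elementary facts: writing $\ell_\alpha(g)$ for the length of the cycle of $\alpha$ through $g$ (the least $k\ge 1$ with $\alpha^k(g)=g$), one has $\mathrm{ord}(\alpha)=\mathrm{lcm}_g\,\ell_\alpha(g)$ and $\ell_{\alpha^d}(g)=\ell_\alpha(g)/\gcd(\ell_\alpha(g),d)$; in particular a permutation of prime-power order is automatically RCC. Using that every proper power of $\alpha$ satisfies the RCC, I would first show that $N:=\mathrm{ord}(\alpha)$ is squarefree with at least two prime divisors: if $p^2\mid N$ then $\alpha^p$ has order $N/p<N$, hence a cycle of that length, and the $p$-adic valuation identity $\ell_\alpha(g)/\gcd(\ell_\alpha(g),p)=N/p$ forces $\ell_\alpha(g)=N$, contradicting that $\alpha$ is non-RCC.

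Write $N=p_1\cdots p_r$ and decompose $\alpha=\alpha_1\cdots\alpha_r$ into its pairwise commuting prime-order components, where $\alpha_i$ is the power of $\alpha$ of order $p_i$ (possible since $\langle\alpha\rangle\cong\prod_i\mathbb{Z}/p_i$). Because the orbit of $g$ under $\langle\alpha\rangle$ factors as a product of the orbits under the $\langle\alpha_i\rangle$, I obtain $\ell_\alpha(g)=\prod_{i:\,\alpha_i(g)\ne g}p_i$. Consequently $\alpha$ has a cycle of length $N$ if and only if some $g$ is moved by every $\alpha_i$; negating this, the non-RCC hypothesis is \emph{equivalent} to $G=\bigcup_{i=1}^r C_G(\alpha_i)$. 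Each subgroup $C_i:=C_G(\alpha_i)=\mathrm{Fix}(\alpha_i)$ is proper (as $\alpha_i\ne\mathrm{id}$) and $\alpha$-admissible (as $\alpha$ commutes with $\alpha_i$), so the hypothesis on proper admissible subgroups makes $\alpha|_{C_i}$ an RCC-automorphism; since a group is never the union of two proper subgroups, in fact $r\ge 3$. The problem is now: assuming $G=\bigcup_i C_i$, show that $\alpha$ fixes every nilpotent normal $\alpha$-admissible subgroup pointwise.

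I argue by contradiction: suppose $\alpha|_H\ne\mathrm{id}$ for some nilpotent normal $\alpha$-admissible $H$. Splitting $H$ into its characteristic (hence normal and $\alpha$-admissible) Sylow subgroups reduces me to a normal $\alpha$-admissible $q$-group $P$ with $\alpha|_P\ne\mathrm{id}$, i.e. $P\not\le C_j$ for some $j$. I then aim to manufacture the forbidden element moved by all $\alpha_i$. Using the hypothesis on quotients, choose a regular cycle $gP$ of the induced automorphism $\tilde\alpha$ on $G/P$; a short computation (the displacement $\alpha_i(g)g^{-1}$ stays outside the normal subgroup $P$) shows that $g$, and indeed every $gu$ with $u\in P$, is moved by each $\alpha_i$ whose image on $G/P$ is nontrivial. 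For a prime $i$ trivial on $G/P$ but nontrivial on $P$, the set of $u\in P$ for which $gu$ is fixed by $\alpha_i$ is a coset of $C_P(\alpha_i)$, a proper subset (its properness passing through $P/\Phi(P)$ via the Burnside Basis Theorem for the $q$-component), so a counting bound $\sum_i|C_P(\alpha_i)|<|P|$ lets me choose $u$ escaping all of them simultaneously, capturing those primes as well.

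The heart of the matter, and the step I expect to be the main obstacle, is the leftover primes $i$ for which $\alpha_i$ is trivial on both $P$ and $G/P$: these are invisible to the construction, since then $gu\in C_i$ for every $u$ as soon as $g\in C_i$. Here I would invoke the stability-group principle: an automorphism of prime order centralizing both $P$ and $G/P$ satisfies $\alpha_i^k(g)=g\,(g^{-1}\alpha_i(g))^k$, so its order divides $\exp(P)$, forcing $p_i=q$. Thus at most one such prime occurs, and it remains only to arrange that the regular coset $gP$ of $\tilde\alpha$ can be chosen outside the proper $\alpha$-admissible subgroup $C_i/P=\ker\!\big(xP\mapsto x^{-1}\alpha_i(x)\big)$; I would establish this by exploiting that this homomorphism $G/P\to P$ intertwines $\tilde\alpha$ with the $\langle\alpha\rangle$-action on $P$, combined with the RCC of $\alpha$ on the proper admissible subgroup $C_i$, to show that not every $\tilde\alpha$-regular coset can lie in $C_i/P$. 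Securing this equivariant avoidance is the delicate point; everything else is bookkeeping with the cycle-length formula and the covering $G=\bigcup_i C_i$.
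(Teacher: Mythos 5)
This theorem is not proved in the paper at all: it is quoted from Horo\v{s}evski\u{\i} (Theorem 3 of \cite{Hor74a}) with a \(\qed\) attached, so there is no in-paper argument to measure yours against and your proof has to stand entirely on its own. Its preparatory part does stand: the squarefreeness of \(N=\mathrm{ord}(\alpha)\) from the hypothesis on proper powers, the formula \(\ell_\alpha(g)=\prod_{i:\,\alpha_i(g)\neq g}p_i\), the resulting reformulation of the non-RCC hypothesis as the covering \(G=\bigcup_i\mathrm{Fix}(\alpha_i)\) with \(r\geq 3\), the reduction to a normal \(\alpha\)-admissible Sylow \(q\)-subgroup \(P\) of \(H\), and the observation that every \(gu\) with \(u\in P\) is moved by each \(\alpha_i\) that is nontrivial on \(G/P\) (because \(\alpha_i(g)g^{-1}\notin P\) and \(P\trianglelefteq G\)) are all correct.

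The endgame, however, has two genuine holes. First, the bound \(\sum_i|C_P(\alpha_i)|<|P|\) is asserted, not proved, and it is false in the generality you need: each bad set \(\{u\in P:\alpha_i(gu)=gu\}\) is indeed a coset of the proper subgroup \(C_P(\alpha_i)\) (or empty), but a finite group can be covered by three or more cosets of proper subgroups (a \(2\)-group by cosets of index-\(2\) subgroups, say), and you have no control on how many indices \(i\) are trivial on \(G/P\) but nontrivial on \(P\), nor on the indices \([P:C_P(\alpha_i)]\) beyond \(\geq q\); so "choose \(u\) escaping all of them simultaneously" needs a real argument, not a union bound. Second, the step you yourself flag as the delicate point --- arranging that some \(\tilde\alpha\)-regular coset of \(G/P\) avoids \(\mathrm{Fix}(\alpha_i)/P\) for the one prime \(p_i=q\) whose component is trivial on both \(P\) and \(G/P\) --- is left entirely open, and it is not clear it is even true without further input: a priori every regular coset of \(\tilde\alpha\) could lie in the proper admissible subgroup \(\mathrm{Fix}(\alpha_i)/P\), and the map \(x\mapsto x^{-1}\alpha_i(x)\) you want to exploit is a \(1\)-cocycle for the conjugation action (not even well defined as a map \(G/P\to P\) when \(P\) is nonabelian), so the kernel/image bookkeeping does not come for free. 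Until both points are supplied, the proof is incomplete.
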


The proof that every finite nilpotent group is an RCC-group then is by contradiction: If $G$ is a counterexample of minimal order, then we can let $H:=G$ in Theorem \ref{horTheo1} and get a contradiction. Our proof that all finite nilpotent groups are RCC-groups also follows from a stronger result, and it is direct. We begin by observing that it suffices to show the RCC for finite $p$-groups by point (2) of the following lemma:

\begin{lemma}\label{splitLem}
(1) If a finite group $G$ has a non-RCC direct factor, then $G$ is non-RCC.

(2) If $G_1,\ldots,G_s$ are finite RCC-groups such that for all $i,j\in\{1,\ldots,n\}$ with $i\not=j$, we have $\mathrm{Hom}(G_i,G_j)=0$, then $G_1\times\cdots\times G_s$ is an RCC-group.
\end{lemma}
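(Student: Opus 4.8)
The plan is to prove the two parts separately.

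For part (1), I would argue contrapositively by exhibiting a non-RCC automorphism of $G$ built from one on the bad direct factor. Write $G = H \times K$ where $H$ is a non-RCC direct factor, so there is a non-RCC-automorphism $\beta$ of $H$. I would extend $\beta$ to an automorphism $\alpha$ of $G$ by letting it act as $\beta$ on $H$ and as the identity on $K$; concretely $\alpha(h,k) := (\beta(h), k)$. Then $\mathrm{ord}(\alpha) = \mathrm{ord}(\beta)$, and the cycle lengths of $\alpha$ are exactly the least common multiples of a cycle length of $\beta$ on $H$ with a cycle length of the identity on $K$ (which are all $1$), so the set of cycle lengths of $\alpha$ equals the set of cycle lengths of $\beta$. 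Since $\beta$ has no cycle of length $\mathrm{ord}(\beta) = \mathrm{ord}(\alpha)$, neither does $\alpha$, so $\alpha$ is a non-RCC-automorphism of $G$ and hence $G$ is non-RCC.

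For part (2), the hypothesis $\mathrm{Hom}(G_i,G_j) = 0$ for $i \neq j$ is the crucial ingredient: it forces every automorphism $\alpha$ of the direct product $G = G_1 \times \cdots \times G_s$ to respect the decomposition. The first step is to show each factor $G_i$ (identified with its canonical copy in $G$) is $\alpha$-admissible. The standard way is to observe that composing the inclusion $G_i \hookrightarrow G$, the automorphism $\alpha$, and the projection $\pi_j : G \to G_j$ yields a homomorphism $G_i \to G_j$, which must be trivial for $j \neq i$; hence $\alpha[G_i] \leq G_i$, and applying the same to $\alpha^{-1}$ gives $\alpha[G_i] = G_i$. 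Thus $\alpha$ restricts to an automorphism $\alpha_i := \alpha\restriction_{G_i}$ of each $G_i$, and $\alpha$ is the direct product $\alpha_1 \times \cdots \times \alpha_s$ under the identification $G \cong G_1 \times \cdots \times G_s$.

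With $\alpha = \alpha_1 \times \cdots \times \alpha_s$ in hand, the remainder is a cycle-structure computation. For a direct product of permutations, $\mathrm{ord}(\alpha) = \mathrm{lcm}_i \, \mathrm{ord}(\alpha_i)$, and the length of the cycle of $\alpha$ through a tuple $(x_1,\ldots,x_s)$ is $\mathrm{lcm}_i \ell_i$, where $\ell_i$ is the length of the $\alpha_i$-cycle through $x_i$. Since each $G_i$ is an RCC-group, each $\alpha_i$ has a cycle of length $\mathrm{ord}(\alpha_i)$; choosing $x_i$ on such a cycle for every $i$ simultaneously produces a cycle of $\alpha$ of length $\mathrm{lcm}_i \, \mathrm{ord}(\alpha_i) = \mathrm{ord}(\alpha)$, so $\alpha$ satisfies the RCC. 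As $\alpha$ was arbitrary, $G$ is an RCC-group.

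I expect the only genuine obstacle to be the admissibility step in part (2): one must verify carefully that $\mathrm{Hom}(G_i,G_j) = 0$ really does kill every cross-projection and hence that $\alpha$ cannot mix the factors. The cycle-length bookkeeping, while the heart of why the RCC is preserved, is routine once the decomposition $\alpha = \prod_i \alpha_i$ is established.
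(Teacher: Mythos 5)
Your proof is correct and follows essentially the same route as the paper: part (1) via the automorphism $\beta\times\mathrm{id}_K$ having the same cycle lengths as $\beta$, and part (2) via the decomposition $\alpha=\alpha_1\times\cdots\times\alpha_s$ forced by the vanishing Hom-sets, followed by the lcm computation of cycle lengths. The only difference is that you spell out the admissibility argument for the decomposition, which the paper leaves implicit with ``by assumption.''
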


\begin{proof}
For (1): Let $G=H\times K$, where $H$ does not satisfy the RCC. Fix an automorphism $\alpha_1$ of $H$ which does not satisfy the RCC. Then the set of cycle lengths of the automorphism $\alpha_1\times\mathrm{id}_K$ of $G$ equals the set of cycle lengths of $\alpha_1$, whence $\alpha_1\times\mathrm{id}_K$ does not satisfy the RCC.

For (2): Let $\alpha$ be an automorphism of $G_1\times\cdots\times G_s$. By assumption, $\alpha$ decomposes as a product $\alpha_1\times\cdots\times\alpha_s$ of automorphisms of the single $G_i$. For $i=1,\ldots,s$, let $L_i$ denote the largest cycle length of $\alpha_i$. Then for any point $(g_1,\ldots,g_s)\in G_1\times\cdots\times G_s$, denoting by $l_i$ the cycle length of $g_i$ under $\alpha_i$ (which is a divisor of $L_i$), we find that the cycle length of $(g_1,\ldots,g_s)$ under $\alpha$ is equal to $\mathrm{lcm}\{l_i\mid i=1,\ldots,s\}$, which is a divisor of $\mathrm{lcm}\{L_i\mid i=1,\ldots,s\}$. On the other hand, if $h_i\in G_i$ is chosen such that its cycle length under $\alpha_i$ equals $L_i$, then the cycle length of $(h_1,\ldots,h_s)$ under $\alpha$ equals $\mathrm{lcm}\{L_i\mid i=1,\ldots,s\}$. Hence $\alpha$ satisfies the RCC.
\end{proof}

\begin{remark}\label{splitRem}
Point (1) of Lemma \ref{splitLem} implies that if there exists any finite non-RCC-group, then there even exist infinitely many. For if $G_0$ is a finite non-RCC-group and $G$ is any finite group, then $G_0\times G$ is also non-RCC.
\end{remark}

We will also need the following easy observation from group-theoretic dynamics:

\begin{proposition}\label{subgroupProp1}
Let $G$ be any group (n.n.~finite), $\varphi$ an endomorphism of $G$ and $e\in\mathbb{N}^+$. Then $\mathrm{per}_e(\alpha):=\{g\in G\mid \varphi^e(g)=g\}$ (the set of points in $G$ which are periodic under $\varphi$ with period a divisor of $e$) is a subgroup of $G$, and for all $e_1,e_2\in\mathbb{N}^+$, if $e_1\mid e_2$, then $\mathrm{per}_{e_1}(\varphi)\leq\mathrm{per}_{e_2}(\varphi)$. In particular, if $G=\langle g_1,\ldots,g_r\rangle$ and there exist $e_1,\ldots,e_r\in\mathbb{N}^+$ such that $\varphi^{e_i}(g_i)=g_i$ for $i=1,\ldots,r$, then $\varphi$ is an automorphism of $G$ whose order is finite and a divisor of $\mathrm{lcm}\{e_1,\ldots,e_r\}$ (and equal to $\mathrm{lcm}\{e_1,\ldots,e_r\}$ if each $e_i$ is minimal).\qed
\end{proposition}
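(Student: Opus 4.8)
The plan is to prove the three assertions in turn, all by elementary manipulation of the endomorphism $\varphi^e$. First I would observe that since $\varphi$ is an endomorphism of $G$, so is every iterate $\varphi^e$, and that $\mathrm{per}_e(\varphi)$ is precisely the fixed-point set of $\varphi^e$. The fixed-point set of any endomorphism is a subgroup: the identity is fixed since $\varphi^e(1)=1$; and if $\varphi^e(g)=g$ and $\varphi^e(h)=h$, then $\varphi^e(gh)=\varphi^e(g)\varphi^e(h)=gh$ and $\varphi^e(g^{-1})=\varphi^e(g)^{-1}=g^{-1}$. This settles the subgroup claim.

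For the monotonicity claim, I would write $e_2=ke_1$ with $k\in\mathbb{N}^+$ and note that if $\varphi^{e_1}(g)=g$, then a short induction on $m$ gives $\varphi^{me_1}(g)=g$ for all $m$, since $\varphi^{(m+1)e_1}(g)=\varphi^{e_1}(\varphi^{me_1}(g))=\varphi^{e_1}(g)=g$. Taking $m=k$ yields $\varphi^{e_2}(g)=g$, so $g\in\mathrm{per}_{e_2}(\varphi)$; this gives the inclusion $\mathrm{per}_{e_1}(\varphi)\leq\mathrm{per}_{e_2}(\varphi)$.

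For the \enquote{in particular} part, set $e:=\mathrm{lcm}\{e_1,\ldots,e_r\}$. Since each $e_i\mid e$, the monotonicity just proved gives $g_i\in\mathrm{per}_{e_i}(\varphi)\leq\mathrm{per}_e(\varphi)$ for every $i$. As $\mathrm{per}_e(\varphi)$ is a subgroup containing the generating set $\{g_1,\ldots,g_r\}$, it must equal all of $G$, so $\varphi^e=\mathrm{id}_G$. Consequently $\varphi$ is bijective with two-sided inverse $\varphi^{e-1}$, hence an automorphism, and $\mathrm{ord}(\varphi)\mid e=\mathrm{lcm}\{e_1,\ldots,e_r\}$.

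Finally, for the equality claim under minimality, let $d:=\mathrm{ord}(\varphi)$, which divides $e$ by the previous step. From $\varphi^d=\mathrm{id}_G$ I get $\varphi^d(g_i)=g_i$ for each $i$, so minimality of $e_i$ forces $e_i\mid d$; taking the least common multiple over $i$ gives $e\mid d$, which together with $d\mid e$ yields $d=e$. I anticipate no serious obstacle in any of these steps, as the argument is essentially bookkeeping with iterates of $\varphi$; the only point genuinely requiring care is the reading of \emph{minimal}, which must mean that $e_i$ is the exact period of $g_i$ under $\varphi$, so that the hypothesis $\varphi^d(g_i)=g_i$ forces the divisibility $e_i\mid d$ rather than merely $e_i\leq d$.
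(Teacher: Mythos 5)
Your proof is correct; the paper states this proposition without proof (it is flagged as an easy observation and closed with a \qed immediately after the statement), and your argument is exactly the routine verification that was intended. Your final remark identifies the one genuinely non-trivial point correctly: that minimality of $e_i$ turns $\varphi^d(g_i)=g_i$ into the divisibility $e_i\mid d$ rests on the standard Euclidean-division argument for periods (if $\varphi^n(g)=g$ and $m$ is the least such exponent, writing $n=qm+s$ with $0\leq s<m$ gives $\varphi^s(g)=g$, forcing $s=0$), which you could spell out in one line but which is entirely standard.
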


In view of this, the RCC in finite $p$-groups immediately follows from the following theorem:

\begin{theorem}\label{pTheo}
Let $G$ be a group of order $p^m$ such that $|G/\mathrm{Frat}(G)|=p^r$. Then for any automorphism $\alpha$ of $G$, setting $\mathrm{ford}(\alpha):=\mathrm{ord}(\tilde{\alpha})$, where $\tilde{\alpha}$ is the image of $\alpha$ under the canonical homomorphism $\mathrm{Aut}(G)\rightarrow\mathrm{Aut}(G/\mathrm{Frat}(G))$, there exist $x_1,\ldots,x_r\in G$ and $k_1,\ldots,k_r\in\{0,\ldots,(m-r)r\}$ such that $G=\langle x_1,\ldots,x_r\rangle$ and for $i=1,\ldots,r$, the cycle length under $\alpha$ of $x_i$ equals $p^{k_i}\cdot\mathrm{ford}(\alpha)$.
\end{theorem}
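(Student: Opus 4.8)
The plan is to pass to the Frattini quotient and split the computation into a linear-algebra problem over $\mathbb{F}_p$ and one structural fact about $p$-automorphisms. Write $V:=G/\mathrm{Frat}(G)$; by the Burnside Basis Theorem this is an $\mathbb{F}_p$-vector space of dimension $r$, and $\tilde\alpha$ acts on it as an element of $\mathrm{GL}(V)\cong\mathrm{GL}_r(\mathbb{F}_p)$ of order $N:=\mathrm{ford}(\alpha)$. For $\bar v\in V$ let $d(\bar v)$ denote its $\tilde\alpha$-orbit length, which always divides $N$. The heart of the argument is to produce a basis $\bar x_1,\ldots,\bar x_r$ of $V$ with $d(\bar x_i)=N$ for \emph{every} $i$; once this is done, any lifts $x_1,\ldots,x_r\in G$ automatically satisfy $G=\langle x_1,\ldots,x_r\rangle$, again by the Burnside Basis Theorem.

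To construct such a basis I would use the Frobenius normal form of $\tilde\alpha$: decompose $V=C_1\oplus\cdots\oplus C_l$ into $\tilde\alpha$-cyclic subspaces whose invariant factors satisfy $f_1\mid\cdots\mid f_l$. Since $f_l$ is the minimal polynomial of $\tilde\alpha$, one checks that $\mathrm{ord}(\tilde\alpha|_{C_l})=N$, and every module generator of a cyclic block has orbit length equal to the order of $\tilde\alpha$ on that block; in particular each of the standard basis vectors $w_l,\tilde\alpha w_l,\ldots,\tilde\alpha^{d_l-1}w_l$ of $C_l$ has orbit length $N$, while the standard basis vector $\tilde\alpha^i w_j$ of $C_j$ has orbit length $\mathrm{ord}(\tilde\alpha|_{C_j})\mid N$. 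The trick is to replace, for each $j<l$, the vector $\tilde\alpha^i w_j$ by $\tilde\alpha^i(w_j+w_l)$: because $C_j$ and $C_l$ are $\tilde\alpha$-invariant with trivial intersection, such a vector has orbit length $\mathrm{lcm}(\mathrm{ord}(\tilde\alpha|_{C_j}),N)=N$, and (noting $d_j\le d_l$ since $f_j\mid f_l$) the new family differs from the Frobenius basis by a unitriangular change of coordinates, hence is again a basis. This is the step I expect to be the main obstacle, and the point where Frobenius normal forms are indispensable.

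For the second step, set $\beta:=\alpha^N$. Since $\tilde\beta=\tilde\alpha^N=\mathrm{id}_V$, we have $\beta\in\mathrm{Aut}_{\mathrm{Frat}(G)}(G)$, which is a $p$-group by the standard consequence of the Burnside Basis Theorem that the kernel of $\mathrm{Aut}(G)\to\mathrm{Aut}(G/\mathrm{Frat}(G))$ has $p$-power order; hence every $\beta$-orbit, in particular that of each $x_i$, has $p$-power length $p^{k_i}$. Now assemble the cycle length of $x_i$ under $\alpha$: since $d(\bar x_i)=N$, any $e$ with $\alpha^e(x_i)=x_i$ is divisible by $N$, and writing $e=Nt$ reduces the condition to $\beta^t(x_i)=x_i$, so the cycle length equals $N\cdot p^{k_i}=p^{k_i}\,\mathrm{ford}(\alpha)$, as claimed. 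Finally, $\beta$ fixes $\bar x_i$ and hence maps the coset $x_i\,\mathrm{Frat}(G)$ (of size $p^{m-r}$) into itself, so the $\beta$-orbit of $x_i$ is contained in it and $p^{k_i}\le p^{m-r}$, giving $0\le k_i\le m-r\le(m-r)r$ since $r\ge1$. Combined with Proposition~\ref{subgroupProp1}, this yields $\mathrm{ord}(\alpha)=p^{\max_i k_i}\,\mathrm{ford}(\alpha)$ and thus the RCC for finite $p$-groups as the advertised corollary.
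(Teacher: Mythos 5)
Your proof is correct, and while it shares the paper's overall skeleton -- pass to $G/\mathrm{Frat}(G)$, solve the elementary abelian case via Frobenius normal forms, then control the lift using the fact that $\mathrm{Aut}_{\mathrm{Frat}(G)}(G)$ is a $p$-group (Theorems \ref{burnsideTheo} and \ref{hallTheo}) -- your execution of the two main steps differs in ways worth noting. For the elementary abelian step (Theorem \ref{elAbTheo}), the paper works with the span of the set $S$ of points of maximal cycle length, taking differences of elements of $S_1\times\cdots\times S_s$, and this forces a lengthy separate treatment of $p=2$ via parities of polynomials over $\mathbb{F}_2$ and Chinese-Remainder manipulations. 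Your construction -- order the blocks by divisibility of invariant factors $f_1\mid\cdots\mid f_l$, observe that every unit of $\mathbb{F}_p[X]/(f_l)$ has orbit length exactly $N=\mathrm{ord}(\tilde\alpha)$ because $f_l$ is the minimal polynomial, and then replace each basis vector $\tilde\alpha^iw_j$ ($j<l$) by $\tilde\alpha^i(w_j+w_l)$ -- is uniform in $p$ and verifiably a basis (project onto each $C_j$ to see linear independence; the unitriangular remark is not even needed), with every member of orbit length $\mathrm{lcm}(\,\cdot\,,N)=N$ by directness of the decomposition. This is a genuine simplification of the paper's argument. For the lifting step, the paper simply sandwiches each cycle length between $\mathrm{ford}(\alpha)$ and $\mathrm{ord}(\alpha)\mid p^{(m-r)r}\mathrm{ford}(\alpha)$, whereas you track the orbit of $x_i$ under $\beta=\alpha^N$ inside the coset $x_i\mathrm{Frat}(G)$ and thereby obtain the sharper bound $k_i\le m-r$, which of course implies the stated $k_i\le (m-r)r$. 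Both routes rest on the same external inputs, but yours is shorter where the paper is longest and yields a slightly stronger conclusion.
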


The proof of Theorem \ref{pTheo} is split into two parts: We first treat the case where $G$ is elementary abelian. The \enquote{jump} from that special case to the general case is not as big as it may seem, since the Frattini subgroup of a finite $p$-group establishes a close connection between automorphisms of that $p$-group and automorphisms of some finite elementary abelian $p$-group. We repeat the according well-known results as soon as we need them.

It turns out that Frobenius normal forms are a powerful tool for studying automorphisms of finite elementary abelian groups; we shall briefly repeat the basic theory. Recall that, as a consequence of the structure theorem for finitely generated modules over a principal ideal domain, for any field $K$ (not necessarily algebraically closed), any $n\in\mathbb{N}^+$ and any $A\in\mathrm{Mat}_{n,n}(K)$, there exists a matrix $U\in\mathrm{GL}_n(K)$ such that $U^{-1}AU$ is a matrix in Frobenius normal form, i.e., it is a block diagonal matrix the blocks of which each are of the form $$F=\begin{pmatrix}0 & 0 & \cdots & 0 & -a_0 \\ 1 & 0 & \cdots & 0 & -a_1 \\ \vdots & \vdots & \vdots & \vdots & \vdots \\ 0 & 0 & \cdots & 1 & -a_{r-1}\end{pmatrix}.$$ For such a Frobenius block matrix $F$, the monic polynomial $p_F(X)=a_0+a_1X+\cdots+a_{n-1}X^{n-1}+X^n\in K[X]$ is its characteristic polynomial (in particular, the matrix is regular if and only if $a_0\not=0$), and the Frobenius block matrix is called the \textit{companion matrix of $p_F(X)$}.

Sticking with the above notation, if $p_1(X),\ldots,p_s(X)$ are the polynomials of which the Frobenius blocks of the Frobenius normal form of $A$ are the companion matrices (possibly with repetitions), then there exists an isomorphism $K^n\rightarrow \prod_{i=1}^s{K[X]/(p_i(X))}$ of $K$-vector spaces under which the action of $A$ corresponds to the multiplication with the element $$(X+(p_1(X)),\ldots,X+(p_s(X)))$$ in the $K$-algebra $$\prod_{i=1}^s{K[X]/(p_i(X))}.$$ This is useful because it allows us to answer certain questions on automorphisms of $(\mathbb{Z}/p\mathbb{Z})^n$ via a regress to the theory of finite fields.

For example, it is immediate by this theory that all finite elementary abelian groups are RCC-groups: If $\alpha$ is any automorphism of $(\mathbb{Z}/p\mathbb{Z})^n$, then with respect to an appropriate $\mathbb{F}_p$-basis, $\alpha$ is represented by a matrix in Frobenius normal form. But then $\alpha$ corresponds to the product map of the multiplications with the images of $X$ under the canonical projections $\mathbb{F}_p[X]\rightarrow\mathbb{F}_p[X]/(p_i(X))$ on the various invariant subspaces on which the diagonal blocks of the matrix act. Now on the one hand, the cycle length of $$(1+(p_1(X)),\ldots,1+(p_s(X)))$$ under this multiplication obviously is the least common multiple of the orders of the images $X+(p_i(X))$ in the various quotient algebras (which are units, as the constant terms of the $p_i(X)$ are nonvanishing), and on the other hand, denoting this least common multiple by $m$, for any element $$(r_1(X)+(p_1(X)),\ldots,r_s(X)+(p_s(X)))$$ from the product of the quotient algebras, its image under the $m$-fold iteration of multiplication with $$(X+(p_1(X)),\ldots,X+(p_s(X)))$$ is $$(X^mr_1(X)+(p_1(X)),\ldots,X^mr_s(X)+(p_s(X)))=(r_1(X)+(p_1(X)),\ldots,r_s(X)+(p_s(X))),$$ proving that $\alpha$ satisfies the RCC.

The following theorem is precisely the statement of Theorem \ref{pTheo} for elementary abelian $p$-groups:

\begin{theorem}\label{elAbTheo}
Let $\alpha$ be an automorphism of $(\mathbb{Z}/p\mathbb{Z})^n$. Then there exists an $\mathbb{F}_p$-basis $v_1,\ldots,v_n$ of $(\mathbb{Z}/p\mathbb{Z})^n$ such that the cycle lengths under $\alpha$ of all the $v_i$ are equal to the order of $\alpha$.
\end{theorem}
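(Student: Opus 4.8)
The plan is to pass from the automorphism $\alpha$ to the matrix $A\in\mathrm{GL}_n(\mathbb{F}_p)$ representing it in some $\mathbb{F}_p$-basis and to regard $V:=\mathbb{F}_p^n$ as a module over the principal ideal domain $\mathbb{F}_p[X]$, where $X$ acts as $A$. Under this identification the cycle length of a vector $v$ under $\alpha$ is the least $k\geq 1$ with $A^kv=v$, and $\mathrm{ord}(\alpha)=\mathrm{ord}(A)$. Invoking the same structure theorem already used in the text, I would fix an invariant factor decomposition $V\cong\bigoplus_{i=1}^s\mathbb{F}_p[X]/(d_i)$ with $d_1\mid\cdots\mid d_s$, where $d_s$ is the minimal polynomial of $A$. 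Writing $N:=\mathrm{ord}(\alpha)$, the key elementary observation is that $N$ is the multiplicative order of $X$ modulo $d_s$: indeed $A^k=\mathrm{id}$ holds iff $d_i\mid X^k-1$ for all $i$, which by $d_i\mid d_s$ is equivalent to $d_s\mid X^k-1$. Thus the single cyclic summand $C:=\mathbb{F}_p[X]/(d_s)$ already realises the full order $N$, while on the complementary summand $W:=\bigoplus_{i<s}\mathbb{F}_p[X]/(d_i)$ every vector has order dividing $N$.

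Next I would produce an order-$N$ basis of $C$ outright. Since $A$ is invertible, $X\nmid d_s$, so each power $X^j$ is a unit modulo $d_s$; hence the standard basis $1,X,\ldots,X^{\deg d_s-1}$ of $C$ consists of cyclic generators, each having minimal polynomial $d_s$ and therefore order exactly $N$. In particular $C$ contains the distinguished order-$N$ element $c_0:=1$, which I will use to correct the rest.

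The remaining task is to repair an arbitrary basis $w_1,\ldots,w_t$ of $W$, whose vectors a priori have orders properly dividing $N$. Here the main obstacle---that no individual smaller summand need realise the full order $N$---is resolved by a boosting trick: replace each $w_k$ by $\widetilde w_k:=w_k+c_0$. Because $w_k$ and $c_0$ lie in the complementary $A$-invariant summands $W$ and $C$, the order of $\widetilde w_k$ is $\mathrm{lcm}(\mathrm{ord}(w_k),N)=N$. I would then take as the desired basis the union of the order-$N$ basis $1,X,\ldots,X^{\deg d_s-1}$ of $C$ with the vectors $\widetilde w_1,\ldots,\widetilde w_t$. That this is again a basis of $V$ is immediate, since the change-of-basis matrix against $\{X^j\}\cup\{w_k\}$ is block triangular with identity diagonal blocks (the corrections $c_0$ only add multiples of the retained basis of $C$), and every one of its members has order $N$ by construction. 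Translating back through the $\mathbb{F}_p$-basis realising the module isomorphism yields the basis of $(\mathbb{Z}/p\mathbb{Z})^n$ asserted in the theorem.
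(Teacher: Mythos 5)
Your proof is correct, and it takes a genuinely different route from the paper's. The decisive structural choice is the invariant factor (divisibility-chain) form $d_1\mid\cdots\mid d_s$ rather than an arbitrary Frobenius block decomposition: the chain condition guarantees that a \emph{single} cyclic summand $C=\mathbb{F}_p[X]/(d_s)$ already realises the full order $N=\mathrm{ord}(\alpha)$ (namely the multiplicative order of $X$ modulo the minimal polynomial $d_s$), after which the unit powers $1,X,\ldots,X^{\deg d_s-1}$ give an order-$N$ basis of $C$, and the correction $w_k\mapsto w_k+1$ boosts any basis of the complementary summand to order $N$ via the lcm rule for invariant direct summands, while the resulting change of basis is unipotent and hence harmless. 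The paper instead works with the raw block decomposition $\prod_i K[X]/(p_i(X))$, in which no single block need realise the full order --- precisely the obstacle your divisibility chain removes --- and proves that the set $S$ of points lying on regular cycles spans the whole space; this forces a case distinction in which $p=2$ requires a delicate argument about parities of polynomials over $\mathbb{F}_2$ and the orders of powers of $1+X$. Your argument is shorter, uniform in $p$, and fully explicit about the basis produced; the paper's is deliberately framed to showcase the Frobenius-normal-form and finite-field machinery it introduces (as the author states), and it establishes the spanning property of $S$ itself, which is equivalent to, but phrased differently from, the existence of a basis inside $S$. Both rest on the same structure theorem for finitely generated modules over a principal ideal domain, and both are complete proofs.
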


This was found independently by Giudici, Praeger and Spiga in \cite{GPS15a} (see Lemma 4.1 there). Their proof has some basic ideas in common with ours (such as directly decomposing the entire vector space into subspaces invariant and indecomposable under the action of $\alpha$, which actually is just the decomposition associated with the block decomposition of the Frobenius normal form of $\alpha$), but our technique of using Frobenius normal forms and, associated with them, polynomials over finite fields, is different from theirs. We give our proof here in order to illustrate the usefulness of this concept.

\begin{proof}[Proof of Theorem \ref{elAbTheo}.]
We begin by observing that it suffices to find a generating set of $(\mathbb{Z}/p\mathbb{Z})^n$ all of whose elements lie on cycles of length $\mathrm{ord}(\alpha)$. Choose an $\mathbb{F}_p$-basis $B=B_1\cup\cdots\cup B_s$ such that $\alpha$ with respect to $B$ is represented by a matrix in Frobenius normal form, where the $B_i$ are bases for the invariant subspaces corresponding to the various diagonal blocks, and let $p_i(X)$ denote the polynomial for which the $i$-th diagonal block is the companion matrix. We identify, under an appropriate isomorphism as mentioned above, $(\mathbb{Z}/p\mathbb{Z})^n$ with $\prod_{i=1}^s{K[X]/(p_i(X))}$. If we denote by $S_i$ the set of points in the $i$-th factor $K[X]/(p_i(X))$ which lie on a cycle of maximal length, then $S_1\times\cdots\times S_s$ is a subset of the set $S$ of points of the entire product algebra whose cycle length under $\alpha$ coincides with $\mathrm{ord}(\alpha)$. Now it is not difficult to see that on the one hand, for each $i=1,\ldots,s$, the \enquote{canonical basis vectors} $1+(p_i(X)),X+(p_i(X)),\ldots,X^{d_i-1}+(p_i(X))$ all are elements of $S_i$ (whence the case $s=1$ is clear and we may assume henceforth that $s\geq 2$), and on the other hand, all differences of elements of $S_i$ lie in the span of $S$ (by considering the various differences of two elements of $S_1\times\cdots\times S_s$ where all but the $i$-th components are equal). If $p$ is odd, then we additionally find that $S_i$ is closed under scalar multiplication with $2$ (because this is a unit in $\mathbb{F}_p$) and hence that the span of $S$ contains, for every $i$, a basis of the $i$-th invariant subspace, whence it is a generating subset, as desired.

So we are left to treat the case $p=2$, in which the above considerations at least yield that all differences $1+X+(p_i(X)),1+X^2+(p_i(X)),\ldots,1+X^{d_i-1}+(p_i(X))$ lie in the span of $S$. Now it is not difficult to see that these form a basis for the hyperplane of elements that are sums of an even number of the canonical basis vectors, so we would be done if we could show that for each $i=1,\ldots,s$, the span of $S$ contains an element of $S_i$ which is a sum of an odd number of the basis vectors. Actually, it would suffice if we could show this for all but one $i$, since then, we know that all $S_j$ for $j\not=i$ are subsets of the span of $S$, and hence $S_i$ as well by appropriate subtractions. We now need some important observations from the theory of finite fields:

(1) If we factor $p_i(X)=\prod_{k=1}^{s_k}{q_{i,k}(X)^{e_{i,k}}}$, where the $q_{i,k}$ are pairwise distinct irreducible polynomials and we assume w.l.o.g.~that $q_{i,1}(X)$ always is $1+X$, possibly with exponent $e_{i,1}=0$, then the order of $X$ in the quotient algebra $\mathbb{F}_2[X]/(p_i(X))$, which is also known as the \textit{order of $p_i(X)$}, see \cite{LN97a}, pp.~84ff., can (by an application of the Chinese Remainder Theorem) be computed as the least common multiple of the orders of the $q_{i,k}(X)^{e_{i,k}}$, and the order of a power of an irreducible polynomial $q_{i,k}(X)^{e_{i,k}}$ is equal to $2^{\lceil\mathrm{log}_2(e_{i,k})\rceil}\cdot\mathrm{ord}(\alpha)$, where $\alpha$ is any root of $q_{i,k}$ in an appropriate splitting field of $q_{i,k}$ over $\mathbb{F}_2$.

(2) Call a polynomial $q(X)\in\mathbb{F}_2[X]$ \textit{even} if it is sum of an even number of monomials, and \textit{odd} otherwise. Associated with this attribution of a parity to polynomials is a surjective ring homomorphism $\pi:\mathbb{F}_2[X]\rightarrow\mathbb{Z}/2\mathbb{Z}$.

(3) The only even irreducible polynomial over $\mathbb{F}_2$ is $1+X$, since every even polynomial has $1$ as a root.

Note that for a point of the product algebra, the property of lying on a cycle of length $\mathrm{ord}(\alpha)$ is equivalent to the least common multiple of the cycle lengths of its components being equal to $\mathrm{ord}(\alpha)$. Fix $i$ such that the $2$-adic valuation of the maximal cycle length from the $i$-th component coincides with the $2$-adic valuation of $\mathrm{ord}(\alpha)$. Then for any $j\not=i$, we claim that we can obtain an element from $S_j$ which is the sum of an odd number of basis vectors in the span of $S$ as follows: If $p_j(X)$ is odd, then $X^{\mathrm{deg}(p_j(X))}+(p_j(X))$ is an element in $S_j$ which is a sum of an even number of basis vectors, so the difference of it with $1+(p_j(X))$ is an element in the span of $S$ which is the sum of an odd number of basis vectors. And if $p_j(X)$ is even, then for $l=2,\ldots,s_j$, let $\mathcal{U}_l:=\{q(X)\in\mathbb{F}_2[x]\mid \mathrm{deg}(q(X))<e_{j,l}\cdot\mathrm{deg}(q_{j,l}(X))\hspace{3pt}\text{and}\hspace{3pt}\mathrm{gcd}(q(X),p_l(X))=1\}$ and let $\iota_l:\mathcal{U}_l\rightarrow\mathcal{U}_l$ denote the function that assigns to each representative from $\mathcal{U}_l$ its unique multiplicative inverse modulo $q_{j,l}(X)^{e_{j,l}}$. Now consider the following two elements of the product algebra: $$(1+(p_1(X)),\ldots,1+(p_s(X)))$$ and $$(1+(p_1(X)),\ldots,1+(p_{j-1}(X)),$$$$\sum_{l=2}^{s}{\frac{p_j(X)}{q_{j,l}(X)^{e_{j,l}}}\iota_l(\frac{p_j(X)}{q_{j,l}(X)^{e_{j,l}}})}+(p_j(X)),1+(p_{j+1}(X)),\ldots,1+(p_s(X))).$$ The $j$-th entry of the second tuple corresponds, under the canonical isomorphism $$\mathbb{F}_2[X]/(p_j(X))\rightarrow\prod_{l=1}^{s_j}{\mathbb{F}_2[X]/(q_{j,l}^{e_{j,l}}}),$$ to $$((1+X)^{e_{j,1}},1+(q_{j,2}^{e_{j,2}}),\ldots,1+(q_{j,s_j}^{e_{j,s_j}})).$$ Now since by (1), a factor $(1+X)^e$ in the factorization of a polynomial over $\mathbb{F}_2$ only contributes a power of $2$ to the order, the cycle length contributed by the $j$-th component of the second tuple has all $p$-adic valuations equal to the ones of the maximal cycle length in that component \textit{except} possibly for the $2$-adic valuation, which, however, is taken care of in the $i$-th component. We conclude that still, the cycle length of the point represented by the second tuple is $\mathrm{ord}(\alpha)$. Taking the difference of the two tuples, we obtain that $$1+\sum_{l=2}^{s_j}{\frac{p_j(X)}{q_{j,l}(X)^{e_{j,l}}}\iota_l(\frac{p_j(X)}{q_{j,l}(X)^{e_{j,l}}})}+(p_j(X))$$ is an element from the $j$-th component contained in the span of $S$. However, since $\frac{p_j(X)}{q_{j,l}(X)}$ is even for $l=2,\ldots,s_j$, we obtain that said element of the $j$-th component can be written as a sum of an odd number of basis vectors, and we are done.
\end{proof}

\begin{corollary}\label{vectorCor}
All finite vector spaces satisfy the RCC.
\end{corollary}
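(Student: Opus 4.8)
The plan is to deduce the corollary directly from Theorem \ref{elAbTheo}, which already settles precisely the elementary abelian case. First I would recall that every finite vector space $V$ is a vector space over some finite field $\mathbb{F}_q$ with $q=p^e$ for a prime $p$ and some $e\in\mathbb{N}^+$, and that its additive group $(V,+)$ is an elementary abelian $p$-group, say $(V,+)\cong(\mathbb{Z}/p\mathbb{Z})^m$ with $m=e\cdot\dim_{\mathbb{F}_q}(V)$. The automorphisms of $V$ in the class of $\mathbb{F}_q$-vector spaces are precisely the $\mathbb{F}_q$-linear bijections of $V$, i.e.\ the elements of $\mathrm{GL}(V)$.

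The key observation is then an inclusion of automorphism groups. Since $\mathbb{F}_p$ is a subfield of $\mathbb{F}_q$, every $\mathbb{F}_q$-linear map is in particular $\mathbb{F}_p$-linear, and the $\mathbb{F}_p$-linear bijections of $V$ are exactly the group automorphisms of the elementary abelian $p$-group $(V,+)$. Hence each vector-space automorphism of $V$ is, in particular, a group automorphism of $(V,+)$. What matters for the RCC is the induced permutation of the finite underlying set $V$, and this permutation is literally the same whether the map is regarded as an $\mathbb{F}_q$-linear automorphism or as an automorphism of the elementary abelian group; in particular, its cycle structure, and thus whether it satisfies the RCC, is unchanged.

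Finally I would invoke Theorem \ref{elAbTheo}: every automorphism of $(\mathbb{Z}/p\mathbb{Z})^m$ satisfies the RCC. As every automorphism of the vector space $V$ lies among these, it too satisfies the RCC, and since $V$ was an arbitrary finite vector space, all finite vector spaces are RCC-structures. There is no genuine obstacle here; the only point deserving a moment's care is the identification of the underlying permutation across the two categories, which is immediate because the set $V$ and the concrete action of any fixed map on it do not depend on which algebraic structure one chooses to remember.
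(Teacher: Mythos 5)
Your argument is correct and is essentially the paper's own proof, which simply observes that a finite vector space has a finite elementary abelian group as a reduct and invokes Theorem \ref{elAbTheo}. You have merely spelled out the reduct step (an $\mathbb{F}_q$-linear automorphism is in particular an automorphism of the underlying elementary abelian $p$-group, acting by the same permutation), which is exactly the intended justification.
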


\begin{proof}
Just observe that any finite vector space has a finite elementary abelian group as a reduct.
\end{proof}

We now extend the result to all finite $p$-groups. Recall that the Frattini subgroup of a group $G$, denoted by $\mathrm{Frat}(G)$, is defined to be the intersection of all the maximal subgroups of $G$ (which is understood to equal $G$ if $G$ has no maximal subgroups). Clearly, $\mathrm{Frat}(G)$ is a characteristic subgroup of $G$, and the following are well-known results on the Frattini subgroup in finite $p$-groups (to be found, for instance, in \cite[p.~140]{Rob96a}):

\begin{theorem}[The Burnside Basis Theorem, \cite{Bur14a}]\label{burnsideTheo}
Let $G$ be a finite $p$-group. Then $\mathrm{Frat}(G)=G'G^p$, that is, $\mathrm{Frat}(G)$ is the smallest normal subgroup $N$ of $G$ such that the quotient $G/N$ is elementary abelian. Furthermore, if $r=\mathrm{dim}_{\mathbb{F}_p}(G/\mathrm{Frat}(G))$ and $x_1,\ldots,x_r\in G$ are such that $(x_1\mathrm{Frat}(G),\ldots,x_r\mathrm{Frat}(G))$ is an $\mathbb{F}_p$-basis for $G/\mathrm{Frat}(G)$, then $G=\langle x_1,\ldots,x_r\rangle$.\qed
\end{theorem}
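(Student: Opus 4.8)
The plan is to prove the three assertions in turn, with the common backbone being the standard structural fact that in a finite $p$-group every maximal subgroup is normal of index $p$. I would establish this first. Let $M$ be a maximal subgroup of $G$. Since finite $p$-groups are nilpotent and a proper subgroup of a nilpotent group is properly contained in its normalizer (the normalizer condition), we have $M < N_G(M)$, and maximality forces $N_G(M)=G$, so $M$ is normal in $G$. Then $G/M$ is a nontrivial $p$-group admitting no proper nontrivial subgroups, hence is cyclic of order $p$; in particular $G/M$ is abelian of exponent $p$.

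Next I would prove $\mathrm{Frat}(G)=G'G^p$. For the inclusion $G'G^p\leq\mathrm{Frat}(G)$, note that for each maximal $M$ the quotient $G/M$ is abelian of exponent $p$, so $G'\leq M$ and $G^p\leq M$; intersecting over all maximal $M$ yields $G'G^p\leq\mathrm{Frat}(G)$. For the reverse inclusion I would pass to $V:=G/G'G^p$. Since $G'\leq G'G^p$ and $G^p\leq G'G^p$, this quotient is abelian of exponent dividing $p$, i.e.\ an $\mathbb{F}_p$-vector space; for nontrivial $G$ it is nonzero because $G'G^p\leq\mathrm{Frat}(G)<G$. The maximal subgroups of $G$ are precisely the preimages of the hyperplanes of $V$, and the intersection of all hyperplanes of a nonzero finite-dimensional $\mathbb{F}_p$-vector space is $\{0\}$, so the intersection of all maximal subgroups of $G$ equals $G'G^p$, giving $\mathrm{Frat}(G)\leq G'G^p$. (The trivial group, where $\mathrm{Frat}(G)=G=G'G^p$, is handled separately.) The \enquote{smallest normal subgroup with elementary abelian quotient} reformulation then follows formally: $V$ is elementary abelian by the computation just made, while any normal $N$ with $G/N$ elementary abelian satisfies $G'\leq N$ (abelian quotient) and $G^p\leq N$ (exponent $p$), hence $G'G^p\leq N$.

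Finally, for the generating statement, let $H:=\langle x_1,\ldots,x_r\rangle$ where the cosets $x_i\mathrm{Frat}(G)$ form an $\mathbb{F}_p$-basis of $G/\mathrm{Frat}(G)$. Then the image of $H$ in $G/\mathrm{Frat}(G)$ is everything, so $H\,\mathrm{Frat}(G)=G$. If $H$ were proper, finiteness would place $H$ inside some maximal subgroup $M$; but $\mathrm{Frat}(G)\leq M$ too, whence $G=H\,\mathrm{Frat}(G)\leq M$, a contradiction, so $H=G$. I expect the only genuinely substantive step to be the claim that maximal subgroups are normal of index $p$, since that is exactly where the $p$-group hypothesis enters (through nilpotency and the normalizer condition); everything else is either linear algebra on $V$ or a purely formal containment argument.
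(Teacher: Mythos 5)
The paper states this theorem without proof, citing Burnside and Robinson's textbook, so there is no in-paper argument to compare against. Your proof is the standard textbook one and is correct: maximal subgroups are normal of index $p$ via the normalizer condition, the two inclusions for $\mathrm{Frat}(G)=G'G^p$ follow from the exponent-$p$ abelian quotients and the hyperplane correspondence in $V=G/G'G^p$, and the generation statement is the usual non-generator argument. The only step you leave implicit is that $G'G^p$ is itself a normal (indeed characteristic) subgroup, which is needed for the ``smallest normal subgroup'' reformulation, but this is immediate since it is a product of two characteristic subgroups.
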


\begin{theorem}[P.~Hall, \cite{Hal33a}]\label{hallTheo}
If $G$ is a group of order $p^m$ such that $|G/\mathrm{Frat}(G)|=p^r$, then the order of $\mathrm{Aut}_{\mathrm{Frat}(G)}(G)$ divides $p^{(m-r)r}$ and the order of $\mathrm{Aut}(G)$ divides $|\mathrm{GL}_r(p)|\cdot p^{(m-r)r}$.\qed
\end{theorem}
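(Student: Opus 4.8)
The plan is to analyze the canonical homomorphism $\rho\colon\mathrm{Aut}(G)\rightarrow\mathrm{Aut}(G/\mathrm{Frat}(G))$, whose kernel is $\mathrm{Aut}_{\mathrm{Frat}(G)}(G)$ by definition. By the Burnside Basis Theorem (Theorem \ref{burnsideTheo}), the quotient $G/\mathrm{Frat}(G)$ is elementary abelian of rank $r$, so $\mathrm{Aut}(G/\mathrm{Frat}(G))\cong\mathrm{GL}_r(p)$, and the image of $\rho$ is a subgroup of $\mathrm{GL}_r(p)$ whose order divides $|\mathrm{GL}_r(p)|$ by Lagrange. Since $|\mathrm{Aut}(G)|=|\mathrm{Aut}_{\mathrm{Frat}(G)}(G)|\cdot|\rho[\mathrm{Aut}(G)]|$, the second divisibility assertion follows immediately from the first, so I would concentrate on bounding the kernel.

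For the kernel I would first obtain the numerical bound $|\mathrm{Aut}_{\mathrm{Frat}(G)}(G)|\leq p^{(m-r)r}$ by a counting argument. Using the Burnside Basis Theorem, fix elements $x_1,\ldots,x_r\in G$ whose images form an $\mathbb{F}_p$-basis of $G/\mathrm{Frat}(G)$, so that $G=\langle x_1,\ldots,x_r\rangle$. Any $\alpha\in\mathrm{Aut}_{\mathrm{Frat}(G)}(G)$ acts trivially on $G/\mathrm{Frat}(G)$, which means $x_i^{-1}\alpha(x_i)\in\mathrm{Frat}(G)$ for each $i$. Since an endomorphism is determined by its values on a generating set, the assignment $\alpha\mapsto(x_1^{-1}\alpha(x_1),\ldots,x_r^{-1}\alpha(x_r))$ is an injection of $\mathrm{Aut}_{\mathrm{Frat}(G)}(G)$ into $\mathrm{Frat}(G)^r$; as $|\mathrm{Frat}(G)|=p^{m-r}$, this yields at most $p^{(m-r)r}$ elements.

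The numerical bound alone gives only an inequality, whereas the theorem asserts divisibility; to upgrade it I would show that $\mathrm{Aut}_{\mathrm{Frat}(G)}(G)$ is a $p$-group, so that its order, being a power of $p$ not exceeding $p^{(m-r)r}$, divides $p^{(m-r)r}$. This is the classical observation of Burnside, and I expect it to be the main obstacle of the argument. I would establish it via coprime action: it suffices to prove that every $\alpha\in\mathrm{Aut}_{\mathrm{Frat}(G)}(G)$ of order coprime to $p$ is the identity, for then every element of $\mathrm{Aut}_{\mathrm{Frat}(G)}(G)$ has $p$-power order and the group is a $p$-group by Cauchy's theorem. Given such an $\alpha$, the cyclic group $\langle\alpha\rangle$ acts coprimely on $G$, so the standard decomposition $G=C_G(\alpha)\cdot[G,\alpha]$ holds, where $[G,\alpha]$ is generated by the commutators $g^{-1}\alpha(g)$. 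Since $\alpha$ is trivial on $G/\mathrm{Frat}(G)$, all these commutators lie in $\mathrm{Frat}(G)$, whence $[G,\alpha]\leq\mathrm{Frat}(G)$ and $G=C_G(\alpha)\cdot\mathrm{Frat}(G)$.

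Because $\mathrm{Frat}(G)$ consists of non-generators, the last equality forces $G=C_G(\alpha)$, i.e.\ $\alpha=\mathrm{id}_G$, as required. Thus $\mathrm{Aut}_{\mathrm{Frat}(G)}(G)$ is a $p$-group of order at most $p^{(m-r)r}$, so its order divides $p^{(m-r)r}$, establishing the first assertion; combined with the first paragraph, $|\mathrm{Aut}(G)|$ divides $|\mathrm{GL}_r(p)|\cdot p^{(m-r)r}$, which is the second. The only external input beyond the Burnside Basis Theorem is the coprime-action decomposition $G=C_G(\alpha)[G,\alpha]$, which is where the real work sits and which I would cite from standard finite group theory rather than reprove.
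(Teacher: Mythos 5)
Your proposal is correct, but note that there is no internal proof to compare it against: the paper states this result with a \qed, citing Hall's original paper \cite{Hal33a}, so the relevant comparison is with the classical argument. Your reduction of the second divisibility claim to the first via $\rho\colon\mathrm{Aut}(G)\rightarrow\mathrm{Aut}(G/\mathrm{Frat}(G))\cong\mathrm{GL}_r(p)$ is the standard one, and your kernel analysis is sound: the injection $\alpha\mapsto(x_1^{-1}\alpha(x_1),\ldots,x_r^{-1}\alpha(x_r))$ into $\mathrm{Frat}(G)^r$ gives the bound $p^{(m-r)r}$, and the coprime-action step legitimately upgrades it to divisibility --- since $G$ is a $p$-group, the decomposition $G=C_G(\alpha)[G,\alpha]$ applies without any solvability caveat, $[G,\alpha]\leq\mathrm{Frat}(G)$ because $\alpha$ is trivial modulo the Frattini subgroup, and the non-generator property of $\mathrm{Frat}(G)$ forces $C_G(\alpha)=G$. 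Where you diverge from Hall's classical route is in how divisibility is extracted. Instead of proving a cardinality bound and separately establishing that $K:=\mathrm{Aut}_{\mathrm{Frat}(G)}(G)$ is a $p$-group, one lets $K$ act coordinatewise on the set $X$ of tuples $(y_1,\ldots,y_r)$ with $y_i\in x_i\mathrm{Frat}(G)$; this action is well defined precisely because $K$ is trivial modulo $\mathrm{Frat}(G)$, every tuple in $X$ generates $G$ by the Burnside Basis Theorem (Theorem \ref{burnsideTheo}), so the action is free, all orbits have size $|K|$, and $|K|$ divides $|X|=p^{(m-r)r}$ in a single stroke --- with the $p$-group property of $K$ then falling out as a corollary rather than being needed as input. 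Your injection is essentially the orbit map of this very action evaluated against the identity, so you were one observation (freeness, hence equal orbit sizes) away from the shorter argument; your detour through $G=C_G(A)[G,A]$ is valid but imports machinery (Schur--Zassenhaus-style coprime action theory) that the problem does not require.
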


Using these results, we are now ready to prove Theorem \ref{pTheo}.

\begin{proof}[Proof of Theorem \ref{pTheo}.]
Choose, by Theorem \ref{elAbTheo}, an $\mathbb{F}_p$-basis $x_1\mathrm{Frat}(G),\ldots,x_r\mathrm{Frat}(G)$ for $G/\mathrm{Frat}(G)$ such that for $i=1,\ldots,r$, the cycle length under $\tilde{\alpha}$ of $x_i\mathrm{Frat}(G)$ coincides with $\mathrm{ford}(\alpha)$. Then by commutativity of the diagram

\begin{center}
\begin{tikzpicture}
\matrix (m) [matrix of math nodes, row sep=3em,
column sep=3em]
{ G & G \\
G/\mathrm{Frat}(G) & G/\mathrm{Frat}(G) \\
};
\path[->]
(m-1-1) edge node[above] {$\alpha$} (m-1-2)
(m-1-1) edge node[left] {$\pi$} (m-2-1)
(m-1-2) edge node[right] {$\pi$} (m-2-2)
(m-2-1) edge node[below] {$\tilde{\alpha}$} (m-2-2);
\end{tikzpicture}
\end{center}

we obtain that the cycle lengths of the $x_i$ under $\alpha$ are all divisible by $\mathrm{ford}(\alpha)$. On the other hand, they all divide $\mathrm{ord}(\alpha)$, which by Theorem \ref{hallTheo} is a divisor of $p^{(m-r)r}\cdot\mathrm{ford}(\alpha)$, and the result follows.
\end{proof}

\begin{corollary}\label{pCor1}
Any finite nilpotent group is an RCC-group.\qed
\end{corollary}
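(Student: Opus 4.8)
The plan is to deduce the corollary from the machinery already assembled, in two stages: first the RCC for an individual finite $p$-group, obtained from Theorem \ref{pTheo} together with Proposition \ref{subgroupProp1}, and then the passage to an arbitrary finite nilpotent group via its Sylow decomposition and Lemma \ref{splitLem}(2).

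For the first stage, let $G$ have order $p^m$ with $|G/\mathrm{Frat}(G)|=p^r$, and let $\alpha\in\mathrm{Aut}(G)$ be arbitrary. Theorem \ref{pTheo} furnishes generators $x_1,\ldots,x_r$ of $G$ and exponents $k_1,\ldots,k_r$ for which the cycle length of $x_i$ under $\alpha$ equals $p^{k_i}\cdot\mathrm{ford}(\alpha)$. The key observation is that these $r$ cycle lengths are totally ordered by divisibility: since they share the common factor $\mathrm{ford}(\alpha)$ and differ only in their powers of $p$, we have $p^{k_i}\cdot\mathrm{ford}(\alpha)\mid p^{k_{i'}}\cdot\mathrm{ford}(\alpha)$ whenever $k_i\leq k_{i'}$. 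Because $G=\langle x_1,\ldots,x_r\rangle$, Proposition \ref{subgroupProp1} identifies $\mathrm{ord}(\alpha)$ with the least common multiple of these cycle lengths; but the least common multiple of a chain under divisibility is simply its largest member, which is attained by the generator $x_j$ with $k_j=\max_i k_i$. Thus $x_j$ lies on a cycle of length exactly $\mathrm{ord}(\alpha)$, so $\alpha$ satisfies the RCC, and since $\alpha$ was arbitrary, $G$ is an RCC-group.

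For the second stage, a finite nilpotent group $G$ is the internal direct product $P_1\times\cdots\times P_s$ of its Sylow subgroups for the distinct primes dividing $|G|$. Each $P_i$ is an RCC-group by the first stage, and for $i\neq j$ the orders $|P_i|$ and $|P_j|$ are coprime, so any homomorphism $P_i\to P_j$ has image a subgroup whose order divides both $|P_i|$ and $|P_j|$ and is therefore trivial; that is, $\mathrm{Hom}(P_i,P_j)=0$. Lemma \ref{splitLem}(2) then yields that $G=P_1\times\cdots\times P_s$ is an RCC-group.

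The whole argument is short once Theorem \ref{pTheo} is available; the single load-bearing point is that the cycle lengths produced there differ among the generators only by a power of $p$, so that the least common multiple forced upon them by Proposition \ref{subgroupProp1} — namely $\mathrm{ord}(\alpha)$ — is already realised by one of the generators rather than only jointly. I expect no genuine obstacle beyond verifying this chain property and the routine coprimality fact $\mathrm{Hom}(P_i,P_j)=0$.
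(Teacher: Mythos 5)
Your proposal is correct and follows essentially the same route the paper intends for this \qed-corollary: reduce to $p$-groups via Lemma \ref{splitLem}(2) applied to the Sylow decomposition (with $\mathrm{Hom}(P_i,P_j)=0$ by coprimality), then observe that the cycle lengths $p^{k_i}\cdot\mathrm{ford}(\alpha)$ from Theorem \ref{pTheo} form a divisibility chain whose least common multiple, which equals $\mathrm{ord}(\alpha)$ by Proposition \ref{subgroupProp1}, is attained by one of the generators. Nothing to add.
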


\begin{corollary}\label{pCor2}
All finite rings are RCC-rings.
\end{corollary}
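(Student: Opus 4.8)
The plan is to reduce to the nilpotent case already established in Corollary \ref{pCor1}, exactly as was done for vector spaces in Corollary \ref{vectorCor}. The key observation is that the RCC is purely a property of a permutation of a set, depending only on its collection of cycle lengths, and is therefore insensitive to which algebraic structure we regard that permutation as respecting. So the whole argument is a \emph{reduct} argument: forgetting the multiplicative structure of the ring can only enlarge the automorphism group while leaving the cycle structure of each individual automorphism untouched.

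First I would note that any finite ring $R$ has its additive group $(R,+)$ as a reduct, and that this additive group is a finite abelian, hence nilpotent, group. Consequently, by Corollary \ref{pCor1}, every automorphism of $(R,+)$ satisfies the RCC. Next I would observe that any ring automorphism $\alpha$ of $R$ is, in particular, an automorphism of the reduct $(R,+)$, since $\alpha$ respects addition; moreover $\alpha$ induces exactly the same permutation of the underlying set $R$ whether it is viewed as a ring automorphism or as an additive automorphism. Hence the two interpretations of $\alpha$ have identical cycle lengths, and since the additive automorphism satisfies the RCC by the previous sentence, so does the ring automorphism. As $\alpha$ was arbitrary, $R$ is an RCC-ring.

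I expect no genuine obstacle here: the entire content is the elementary but useful remark that passing to a reduct enlarges (or at least does not shrink) the automorphism group while preserving the cycle structure of every automorphism it retains, so the RCC established for the larger additive automorphism group transfers verbatim to the smaller ring automorphism group. The only point worth stating explicitly, to keep the logic clean, is that the RCC is a property of the bare permutation and not of the algebraic data it respects.
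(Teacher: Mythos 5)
Your proof is correct and follows exactly the paper's argument: the paper likewise reduces to Corollary \ref{pCor1} by observing that any finite ring has a finite abelian group as a reduct, so every ring automorphism is in particular an automorphism of the (nilpotent) additive group and inherits the RCC. Your additional remarks spelling out why the reduct argument works are accurate but not needed beyond what the paper states.
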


\begin{proof}
This follows immediately from Corollary \ref{pCor1}, by observing that any finite ring has a finite abelian group as a reduct.
\end{proof}

\section{Two conditions sufficient for the RCC}

Since the order of a permutation of a finite set is the least common multiple of its cycle lengths, it is clear that a permutation of a finite set whose order is a power of a prime has a regular cycle, whereas for every composite number $o$, there exists an $n\in\mathbb{N}^+$ and a $\sigma\in\mathcal{S}_n$ of order $o$ without a regular cycle. In the first part of this section, we will show that an automorphism $\alpha$ of a finite group $G$ satisfies the RCC if its order is a product of at most two prime powers, whereas for every other natural number $o$, there exists a non-RCC-automorphism of a finite group whose order equals $o$.

The proof for sufficiency of said condition builds up on the following observation, which can be seen as a strengthening of Lemma 4 from \cite{Hor74a}:

\begin{lemma}\label{dominanceLem}
Let $G$ be any group and let $\alpha$ be an automorphism of $G$ of finite order $o$. If $x,y\in G$ such that for some prime $p$, denoting by $l_x$ and $l_y$ the cycle length under $\alpha$ of $x$ and $y$ respectively, $\mathrm{max}\{\nu_p(l_x),\nu_p(l_y)\}>\mathrm{min}\{\nu_p(l_x),\nu_p(l_y)\}$, then the cycle length $l$ under $\alpha$ of $xy$ satisfies $\nu_p(l)=\mathrm{max}\{\nu_p(l_x),\nu_p(l_y)\}$.
\end{lemma}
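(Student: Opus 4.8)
The plan is to reduce the whole statement to a single elementary divisibility fact about products, combined with the observation that the hypothesis singles out one of the two elements as \emph{dominant} at the prime $p$. Throughout, recall that since $\alpha$ has finite order, the cycle length of an element $g\in G$ is the least positive integer $n$ with $\alpha^n(g)=g$, and that $\alpha^n(g)=g$ holds exactly when this cycle length divides $n$. Because $\alpha$ is in particular an endomorphism, $\alpha^n(uv)=\alpha^n(u)\alpha^n(v)$ for all $u,v\in G$, so whenever $n$ is a common multiple of the cycle lengths of $u$ and $v$ we get $\alpha^n(uv)=uv$. This yields the auxiliary fact I will use twice: \emph{for any $u,v\in G$, the cycle length of $uv$ divides the $\mathrm{lcm}$ of the cycle lengths of $u$ and $v$}. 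Crucially, this uses only that $\alpha$ is a homomorphism, so it holds in an arbitrary, possibly non-abelian, $G$. By symmetry of the hypothesis in $x$ and $y$, I may assume $\nu_p(l_x)>\nu_p(l_y)$; write $a:=\nu_p(l_x)$, $b:=\nu_p(l_y)$, so $a>b$, and let $l$ be the cycle length of $xy$.

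The upper bound $\nu_p(l)\le a$ is immediate: applying the auxiliary fact to $u=x$, $v=y$ gives $l\mid\mathrm{lcm}(l_x,l_y)$, hence $\nu_p(l)\le\max\{a,b\}=a$. For the matching lower bound $\nu_p(l)\ge a$, the key device is to recover $x$ from the product by writing $x=(xy)\,y^{-1}$. Since $\alpha^n(y^{-1})=\alpha^n(y)^{-1}$, the element $y^{-1}$ has the same cycle length $l_y$ as $y$, so applying the auxiliary fact to $u=xy$ and $v=y^{-1}$ gives $l_x\mid\mathrm{lcm}(l,l_y)$, and therefore $a=\nu_p(l_x)\le\max\{\nu_p(l),b\}$. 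As $a>b$, the maximum on the right cannot equal $b$, forcing $\nu_p(l)\ge a$. Combining the two estimates yields $\nu_p(l)=a=\max\{\nu_p(l_x),\nu_p(l_y)\}$, which is the claim.

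I expect no real obstacle here; the only non-routine idea is the substitution $x=(xy)\,y^{-1}$, which turns the very same product estimate into a lower bound after it has already delivered the upper bound. It is worth stressing that commutativity of $G$ is never invoked — the entire argument rests on $\alpha$ being a homomorphism of finite order — which explains why the lemma can be stated for arbitrary groups $G$.
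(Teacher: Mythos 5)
Your proof is correct and takes essentially the same route as the paper's: both get the upper bound from the fact that the cycle length of a product divides the $\mathrm{lcm}$ of the cycle lengths of the factors, and both get the lower bound from a cancellation argument (the paper evaluates $\alpha^{N}$ on $xy$ for a suitable exponent $N$ fixing $y$ but not $x$ and cancels $y$; you repackage the very same cancellation as the identity $x=(xy)y^{-1}$ fed back into the $\mathrm{lcm}$ bound). The only loose point is the reduction \enquote{by symmetry of the hypothesis} to the case $\nu_p(l_x)>\nu_p(l_y)$ --- the conclusion concerns $xy$, not $yx$, so the two cases are not literally symmetric in a nonabelian group; the paper justifies this reduction by noting that $g$ and $g^{-1}$ have the same cycle length (so one may pass to $(xy)^{-1}=y^{-1}x^{-1}$), and alternatively your own device adapts directly by writing $y=x^{-1}(xy)$ in the other case.
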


\begin{proof}
Observing that the cycle lengths under $\alpha$ of $g$ and $g^{-1}$ are the same for all $g\in G$, we may w.l.o.g.~assume that $k_1:=\nu_p(l_x)>\nu_p(l_y)=:k_1'$. If $p=p_1,p_2,\ldots,p_r$ is a finite list of primes containing all the prime divisors of $\mathrm{lcm}(l_x,l_y)$, then setting, for $i=2,\ldots,r$, $K_i:=\nu_{p_i}(\mathrm{lcm}(l_x,l_y))$, by Proposition \ref{subgroupProp1}, $xy$ is an element of $\mathrm{per}_{p_1^{k_1}p_2^{K_2}\cdots p_r^{K_r}}(\alpha)$, whence in particular, $\nu_p(l)\leq k_1$. If $\nu_p(l)<k_1$, then $xy$ would be an element of $$\mathrm{per}_{p_1^{k_1-1}p_2^{K_2}\cdots p_r^{K_r}}(\alpha).$$
However, since $$\alpha^{p_1^{k_1-1}p_2^{K_2}\cdots p_r^{K_r}}(x)\not=x,$$ we conclude that also $$\alpha^{p_1^{k_1-1}p_2^{K_2}\cdots p_r^{K_r}}(xy)=\alpha^{p_1^{k_1-1}p_2^{K_2}\cdots p_r^{K_r}}(x)\cdot y\not=xy.$$
\end{proof}

As an immediate consequence, we obtain:

\begin{theorem}\label{dominanceTheo}
Let $G$ be any group and let $\alpha$ be an automorphism of $G$ of finite order $o$. Then the following hold:

(1) For any distinct primes $p,q$, $\alpha$ has a cycle whose length is divisible by $p^{\nu_p(o)}q^{\nu_q(o)}$.

(2) If the order of $\alpha$ is divisible by at most two distinct primes, then $\alpha$ satisfies the RCC.
\end{theorem}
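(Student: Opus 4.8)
The plan is to read off both statements from Lemma~\ref{dominanceLem} together with the elementary fact that the order $o$ of $\alpha$ equals the least common multiple of all its cycle lengths. Indeed, $\alpha^n=\mathrm{id}$ holds precisely when $n$ is a common multiple of all cycle lengths $l_g$ (for $g\in G$), so $o=\mathrm{lcm}_{g\in G}\{l_g\}$ and hence $\nu_p(o)=\max_{g\in G}\nu_p(l_g)$ for every prime $p$. Since this maximum is attained, for each prime $p$ there exists an element $x_p\in G$ whose cycle length satisfies $\nu_p(l_{x_p})=\nu_p(o)$. This existence statement is the only ingredient needed beyond the lemma.

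For part (1), fix distinct primes $p,q$ and set $a:=\nu_p(o)$, $b:=\nu_q(o)$. I would choose $x$ with $\nu_p(l_x)=a$ and $y$ with $\nu_q(l_y)=b$, and aim to exhibit a single element whose cycle length has $p$-valuation $a$ and $q$-valuation $b$, which is then divisible by $p^aq^b$. Note that always $\nu_q(l_x)\le b$ and $\nu_p(l_y)\le a$. If $\nu_q(l_x)=b$, then $x$ itself already does the job, and symmetrically if $\nu_p(l_y)=a$, then $y$ works. In the remaining case both inequalities are strict, so applying Lemma~\ref{dominanceLem} to the pair $(x,y)$ at the prime $p$ (where $\nu_p(l_x)=a>\nu_p(l_y)$) gives $\nu_p(l_{xy})=a$, and applying it at the prime $q$ (where $\nu_q(l_y)=b>\nu_q(l_x)$) gives $\nu_q(l_{xy})=b$; hence $p^aq^b\mid l_{xy}$, as required.

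Part (2) then follows at once. If $o$ is a prime power $p^a$, every cycle length divides $o$ and their least common multiple is $o$, so some cycle has length exactly $p^a=o$ and $\alpha$ satisfies the RCC (this is the prime-power case already noted in the text); the case $o=1$ is trivial. If $o=p^aq^b$ with $p\ne q$, then part (1) produces a cycle whose length is divisible by $p^aq^b=o$; since every cycle length also divides $o$, that cycle has length exactly $o$, i.e.\ it is a regular cycle, and $\alpha$ satisfies the RCC.

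I do not expect a genuine obstacle here, which is consistent with the paper's description of the theorem as an immediate consequence. The only point requiring care is the book-keeping of the degenerate configurations in part (1)---namely when one of the chosen witnesses $x,y$ already realises both prescribed valuations, in which case Lemma~\ref{dominanceLem} cannot be invoked (its hypothesis demands unequal $p$-valuations) but is also unnecessary. The clean case distinction above disposes of these, and everything else is a direct transcription of the lemma.
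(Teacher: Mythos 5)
Your proof is correct and uses exactly the paper's strategy: combine Lemma~\ref{dominanceLem} with the fact that $\mathrm{ord}(\alpha)$ is the lcm of the cycle lengths, applied to witnesses $x,y$ realising the maximal $p$- and $q$-valuations. The paper phrases part (1) as a proof by contradiction while you argue directly (with an explicit disposal of the degenerate cases where one witness already realises both valuations), but this is only a cosmetic difference.
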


\begin{proof}
For (1): Otherwise, since the order of $\alpha$ is the least common multiple of its cycle lengths, $\alpha$ would still have a cycle whose length is divisible by $p^{\nu_p(o)}$ (but not by $q^{\nu_q(o)}$) and a cycle whose length is divisible by $q^{\nu_q(o)}$ (but not by $p^{\nu_p(o)}$). Let $x$ be any point from the support of the first cycle, and let $y$ be a point from the support of the second cycle. Then by Lemma \ref{dominanceLem}, the cycle length under $\alpha$ of $xy$ is divisible by $p^{\nu_p(o)}q^{\nu_q(o)}$, a contradiction.

For (2): This follows immediately from (1).
\end{proof}

We end this first part of the section as promised, by showing the following:

\begin{proposition}\label{manyPrimeProp}
Let $o\in\mathbb{N}^+$ be divisible by at least three distinct primes. Then there exists a finite supersolvable group $G$ and a non-RCC-automorphism $\alpha$ of $G$ such that $\mathrm{ord}(\alpha)=o$.
\end{proposition}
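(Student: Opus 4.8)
The plan is to build the required pair $(G,\alpha)$ explicitly as a semidirect product, guided by the observation (a consequence of Proposition \ref{subgroupProp1}) that $\alpha$ fails the RCC precisely when $G$ is covered by the proper subgroups $\mathrm{per}_{o/p}(\alpha)$, $p\mid o$, together with the fact (Corollary \ref{pCor1}) that any such $G$ is necessarily non-nilpotent. First I would reduce to a convenient shape of $o$: since $o$ has at least three distinct prime divisors, I can write $o=ABC$ with $A,B,C>1$ pairwise coprime. The target is an automorphism whose only cycle lengths are $1,\,BC,\,CA,\,AB$; their least common multiple is $ABC=o$ while each is a proper divisor of $o$ (as $A,B,C\geq 2$), so such an $\alpha$ has order $o$ and no regular cycle.

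To realize these cycle lengths group-theoretically I would take $F=C_A\times C_B\times C_C$ (a cyclic group of order $o$) and set $G=F\rtimes(C_2\times C_2)$, where $K=\langle c_1,c_2\rangle\cong C_2\times C_2$ acts by sign automorphisms: $c_1$ acts as $(\mathrm{id},\mathrm{inv},\mathrm{inv})$ and $c_2$ as $(\mathrm{inv},\mathrm{id},\mathrm{inv})$ on the three cyclic factors, where $\mathrm{inv}$ denotes inversion. This $G$ is supersolvable, since $F$ is a cyclic normal subgroup with $G/F\cong C_2\times C_2$ abelian, which yields a normal series with cyclic factors. I would then define $\alpha$ to fix $F$ pointwise and to send $c_1\mapsto c_1f_1$ and $c_2\mapsto c_2f_2$, where $f_1$ is a generator of $C_B\times C_C$ (trivial in the $A$-component) and $f_2$ is a generator of $C_A\times C_C$ (trivial in the $B$-component), chosen so that $f_1$ and $f_2$ share the same $C_C$-component $z$.

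The verification then splits into two parts. First, that $\alpha$ is a well-defined automorphism: this amounts to checking that the defining relations of $G$ are respected, i.e. $\sigma_i(f_i)=f_i^{-1}$ (so that $c_i^2=1$ is preserved, writing $\sigma_i$ for the action of $c_i$ on $F$), the compatibility $\sigma_2(f_1)f_2=\sigma_1(f_2)f_1$ (so that $[c_1,c_2]=1$ is preserved), and preservation of the action relations $c_ifc_i^{-1}=\sigma_i(f)$ (automatic since $\alpha$ fixes $F$ and $F$ is abelian); the choice of signs and of the common $C_C$-component $z$ is engineered exactly so that all three hold. Second, that the cycle lengths are as claimed: since $\alpha$ fixes $F$ pointwise and each coset of $F$ setwise, a direct computation gives $\alpha^m(c_1 f)=c_1f_1^m f$, $\alpha^m(c_2 f)=c_2 f_2^m f$, and $\alpha^m(c_1c_2 f)=c_1c_2 f_3^m f$ with $f_3=\sigma_2(f_1)f_2$; hence the three nontrivial cosets carry cycles of length $\mathrm{ord}(f_1)=BC$, $\mathrm{ord}(f_2)=CA$, and $\mathrm{ord}(f_3)=AB$, while $F$ is fixed pointwise.

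The crux — the step I expect to cost the most care — is the coordination in the $C_C$-component. Because both $c_1$ and $c_2$ invert on $C_C$, the element $c_1c_2$ acts trivially there, so the shared component $z$ cancels in $f_3=\sigma_2(f_1)f_2$, killing the $C$-part of $f_3$ so that $\mathrm{ord}(f_3)=AB$ rather than $ABC$; at the same time, this cancellation is precisely what makes the commutator relation $\sigma_2(f_1)f_2=\sigma_1(f_2)f_1$ hold. This is where genuine non-commutativity is indispensable: in an abelian $G$ the identity $(c_if_i)^2=f_i^2$ would force $f_i$ to have order dividing $2$, ruling out the construction, in accordance with Corollary \ref{pCor1} (and complementing Theorem \ref{dominanceTheo}, which explains why three primes are needed). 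To finish, I would record that $\alpha$ has order $\mathrm{lcm}(BC,CA,AB)=o$ but no cycle of length $o$, so $\alpha$ is a non-RCC-automorphism of the supersolvable group $G$ with $\mathrm{ord}(\alpha)=o$.
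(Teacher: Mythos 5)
Your construction is correct, and it is recognizably the same underlying idea as the paper's: a Klein four group acting by the three nontrivial sign patterns on a three-block abelian normal subgroup, with an automorphism whose cycle lengths on the three nontrivial cosets are the three pairwise products $BC$, $CA$, $AB$. All the verifications you flag do go through: $\sigma_i(f_i)=f_i^{-1}$ holds because $f_i$ is supported exactly on the two blocks that $c_i$ inverts, the shared $C_C$-component $z$ makes $\sigma_2(f_1)f_2=\sigma_1(f_2)f_1=(a,b,0)$, and $\mathrm{ord}(f_3)=AB$ because the $z$'s cancel. That said, you diverge from the paper in two genuine ways, both of which buy something. First, the paper proves the result only for $o$ with exactly three prime divisors (its Lemma \ref{threePrimeLem}) and then handles general $o$ by taking a direct product with cyclic groups $\mathbb{Z}/p_i^{k_i+1}\mathbb{Z}$ carrying automorphisms of order $p_i^{k_i}$ for the remaining primes; your partition of $o$ into three pairwise coprime blocks $A,B,C>1$ absorbs all primes at once and makes the second step unnecessary. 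Second, the paper's automorphism is \emph{inner} (conjugation by $b_1b_2b_3$), so the displacement on the coset $\alpha_iB$ is a square, namely $b_j^{-2}b_k^{-2}$; this halves the order at the prime $2$ and forces the paper to inflate the $2$-part of the cyclic factors via the function $f$ with $f(2^n)=2^{n+1}$, yielding a group of order $4f(o)$. Your automorphism translates the $c_i$ by the elements $f_i$ directly (and is outer when $2\mid o$), so no inflation is needed and your group has order $4o$ -- in particular, for $o=30$ it produces a non-RCC group of order $120$, matching the minimum established in Theorem \ref{counterExTheo}, whereas the paper's construction only reaches order $240$ (cf.\ Remark \ref{threePrimeRem}(2)). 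The price is that you must verify by hand that the map on generators respects the presentation of the semidirect product, which the inner-automorphism route gets for free; you do carry out exactly the needed checks.
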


We begin by treating the case $r=3$ separately, showing the following stronger statement in generalization of an example given in \cite{Hor74a}:

\begin{lemma}\label{threePrimeLem}
Define a multiplicative number-theoretic function $f:\mathbb{N}^+\rightarrow\mathbb{N}^+$ by $f(2^n)=2^{n+1}$ and, for $p>2$ prime, $f(p^n)=p^n$, for all $n\in\mathbb{N}$. Then if $p_1<p_2<p_3$ are primes and $(k_1,k_2,k_3)\in(\mathbb{N}^+)^3$, setting $o:=p_1^{k_1}p_2^{k_2}p_3^{k_3}$ there exists a finite supersolvable group $G_o$ of order $4f(o)$ having a non-RCC automorphism $\alpha$ of order $o$.
\end{lemma}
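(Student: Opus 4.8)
The plan is to realize $\alpha$ as an \emph{inner} automorphism of an explicit split extension, so that its cycle lengths become centralizer indices inside a cyclic group that can be read off directly, and so that the obstruction from the prime $2$ is exactly what forces the doubling built into $f$.

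First I would fix cyclic groups $Z_i=\langle z_i\rangle$ of order $f(p_i^{k_i})$ (so $|Z_1|=2^{k_1+1}$ when $p_1=2$, and $|Z_i|=p_i^{k_i}$ for odd $p_i$), set $A=Z_1\times Z_2\times Z_3=\langle z\rangle$ with $z=z_1z_2z_3$, so that $A$ is cyclic of order $f(o)$, and let $T=\langle s\rangle\times\langle t\rangle\cong\mathbb{Z}/2\times\mathbb{Z}/2$ act on $A$ so that each of the three involutions $s,t,st$ inverts exactly two of the factors $Z_i$ and centralizes the third, following the three $2$-subsets $\{1,2\},\{2,3\},\{1,3\}$ of $\{1,2,3\}$. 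This assignment is consistent precisely because the symmetric difference of two of these subsets is the third, matching the composition of the corresponding inversions. I then put $G_o=A\rtimes T$, so that $|G_o|=4f(o)$, and I take $\alpha$ to be conjugation by $z$.

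Since $A$ is abelian and normal, for any $g=z^a\tau$ with $\tau\in T$ one has $z^jgz^{-j}=g$ iff $z^j$ commutes with $\tau$, so the $\alpha$-orbit of $g$ has length $[A:C_A(\tau)]$, which depends only on $\tau$. I would first compute $A\cap Z(G_o)=C_A(s)\cap C_A(t)$: inversion fixes an element of $A$ only on odd-order factors or on the order-$2$ part, giving $A\cap Z(G_o)=1$ when $p_1$ is odd and $A\cap Z(G_o)=\langle z^o\rangle$ of order $2$ when $p_1=2$. In both cases $\mathrm{ord}(\alpha)=\mathrm{ord}(z)/|A\cap Z(G_o)|=f(o)/|A\cap Z(G_o)|=o$. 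Computing $C_A(s),C_A(t),C_A(st)$ then shows that the orbits of $s,t,st$ have lengths equal to the three ``two-prime'' products $o/p_3^{k_3}$, $o/p_1^{k_1}$, $o/p_2^{k_2}$, whose least common multiple is $o$; this reconfirms $\mathrm{ord}(\alpha)=o$ and shows every prime power $p_i^{k_i}$ is attained as the $p_i$-part of some cycle length. For the non-RCC property I only need to check the four values $\tau\in\{1,s,t,st\}$, and in each case $C_A(\tau)$ \emph{strictly} contains $A\cap Z(G_o)$, so every orbit has length $[A:C_A(\tau)]<[A:A\cap Z(G_o)]=o$; hence no cycle of $\alpha$ has length $\mathrm{ord}(\alpha)=o$, i.e.\ $\alpha$ is non-RCC. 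Supersolvability is then immediate: $A$ is cyclic and normal, each $Z_i$ and each of its characteristic subgroups is normal in $G_o$ (as $T$ preserves $Z_i$), and the series through these extends via $A\triangleleft A\langle s\rangle\triangleleft G_o$ (both normal since $G_o/A$ is abelian) with cyclic factors throughout.

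The delicate point — and the main obstacle — is the prime $2$. For odd $p_i$ the inversion action genuinely moves the order-$p_i$ element, so that factor contributes fully to $\mathrm{ord}(\alpha)$ while the covering by the three centralizers still goes through; but when $p_1=2$, inversion \emph{fixes} the unique involution of $Z_1$, which would otherwise force the $2$-part of $\mathrm{ord}(\alpha)$ to drop below $2^{k_1}$ and collapse the whole argument. The remedy, and exactly the reason the statement carries the function $f$ rather than the identity, is to enlarge the $2$-part from $\mathbb{Z}/2^{k_1}$ to $\mathbb{Z}/2^{k_1+1}$, so that after the unavoidable loss of the central involution the conjugation still has $2$-part exactly $2^{k_1}$. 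Checking that this single adjustment simultaneously keeps $\mathrm{ord}(\alpha)=o$, keeps every $C_A(\tau)$ strictly above $A\cap Z(G_o)$, and thus introduces no regular cycle, is where the bulk of the (routine but careful) $2$-adic bookkeeping lies.
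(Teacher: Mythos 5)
Your construction is exactly the paper's: the same cyclic group of order $f(o)$ extended by a Klein four group whose involutions each invert two of the three primary factors, with $\alpha$ the inner automorphism induced by a generator of the cyclic normal subgroup, and your centralizer-index computation of the cycle lengths $o/p_i^{k_i}$ on the three nontrivial cosets matches the paper's (more tersely stated) verification. The proposal is correct and takes essentially the same approach.
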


\begin{proof}
Define, for $i=1,2,3$, the group $B_i$ as $\mathbb{Z}/f(p_i^{k_i})\mathbb{Z}$, and let $B:=B_1\times B_2\times B_3$. The automorphism group of $B$ contains, for $i=1,2,3$, an element $\alpha_i$ acting identically on $B_i$ and inverting the elements from the other two factors. We have $\alpha_3=\alpha_1\alpha_2$, and the subgroup of $\mathrm{Aut}(B)$ generated by $\alpha_1$ and $\alpha_2$ is isomorphic to the Klein four group. Consider the natural semidirect product $G_o$ of $B$ with this automorphism group (a subgroup of $\mathrm{Hol}(B)$). It is clearly a supersolvable group of order $4f(o)$. Now consider the inner automorphism $\alpha$ of $G_o$ given by conjugation with the element $b_1b_2b_3$, where $b_i$, for $i=1,2,3$, is any generator of $B_i$. $\alpha$ acts identically on $B$, and it is not difficult to see that the cycle length under $\alpha$ of any element from the coset $\alpha_iB$, $i\in\{1,2,3\}$, equals $\frac{o}{p_i^{k_i}}$, whence $\mathrm{ord}(\alpha)=o$, and $\alpha$ has no regular cycle.
\end{proof}

\begin{remark}\label{threePrimeRem}
(1) Horo\v{s}evski\u{\i} in \cite{Hor74a} gave the construction for $o=3\cdot 5\cdot 7$.

(2) The least order of a non-RCC-group obtainable by this construction is $4\cdot 4\cdot 3\cdot 5=240$. Horo\v{s}evski\u{\i} also gave other examples of finite non-RCC-groups at the end of \cite{Hor74a}, but these are of even larger order.
\end{remark}

\begin{proof}[Proof of Proposition \ref{manyPrimeProp}.]
Let $o=p_1^{k_1}\cdots p_r^{k_r}$ be the prime factorization of $o$, with $r\geq 3$ and w.l.o.g.~$p_1<p_2<\cdots<p_r$. Fix a group $G_{p_1^{k_1}p_2^{k_2}p_3^{k_3}}$ as in Lemma \ref{threePrimeLem} and set $$G:=G_{p_1^{k_1}p_2^{k_2}p_3^{k_3}}\times\prod_{i=4}^r{\mathbb{Z}/p_i^{k_i+1}\mathbb{Z}}.$$ As a finite direct product of supersolvable groups, $G$ is supersolvable. Let $\alpha$ be a non-RCC-automorphism of the first factor of order $p_1^{k_1}p_2^{k_2}p_3^{k_3}$ and let, for $i=4,\ldots,r$, $\alpha_i$ be an automorphism of $\mathbb{Z}/p_i^{k_i+1}\mathbb{Z}$ of order $p_i^{k_i}$. Then it is readily checked that the product $\alpha\times\alpha_4\times\cdots\alpha_r$ is a non-RCC-automorphism of $G$ of order $o$.
\end{proof}

As for the second part of this section, we will show that an automorphism of a finite group satisfying some kind of \enquote{large cycle condition} also has a regular cycle. This builds up on other results on \enquote{large cycle automorphisms} from \cite{Bor15a}; for the reader's convenience, we quickly present those results that we need here.

\begin{definition}\label{lambdaDef}
Let $G$ be a finite group. For an automorphism $\alpha$ of $G$, define $\lambda(\alpha)$ to be the quotient of the largest cycle length of $\alpha$ by $|G|$. Also, define $\lambda(G)$ to be the maximum of the $\lambda(\alpha)$, where $\alpha$ runs through all automorphisms of $G$.
\end{definition}

\begin{theorem}\label{largeCycTheo1}(Theorem 1.7 in \cite{Bor15a}.)
Let $G$ be a finite group such that $\lambda(G)>\frac{1}{2}$. Then $G$ is abelian.\qed
\end{theorem}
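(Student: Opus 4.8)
The plan is first to upgrade the hypothesis into the much stronger statement that the long cycle is a \emph{regular} cycle. Suppose $\alpha\in\mathrm{Aut}(G)$ realizes $\lambda(G)>\tfrac12$, so that some $g\in G$ lies on a cycle $O=\{g,\alpha(g),\alpha^2(g),\ldots\}$ of length $L>\tfrac12|G|$. Since every proper subgroup of $G$ has order at most $\tfrac12|G|<L$, the set $O$ generates $G$. On the other hand $O\subseteq\mathrm{per}_L(\alpha)$, because $\alpha^L(\alpha^i(g))=\alpha^i(\alpha^L(g))=\alpha^i(g)$, and $\mathrm{per}_L(\alpha)$ is a subgroup by Proposition \ref{subgroupProp1}. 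Hence $\mathrm{per}_L(\alpha)=\langle O\rangle=G$, i.e.\ $\alpha^L=\mathrm{id}$, so that $\mathrm{ord}(\alpha)=L$ and $O$ is a regular cycle. Thus it suffices to prove that a finite \emph{non-abelian} group admits no automorphism of order exceeding $\tfrac12|G|$, and I would argue this by contradiction, taking $G$ of minimal order.

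Next I would set up the induction by pushing the long cycle through quotients. Let $N$ be a proper nontrivial $\alpha$-invariant normal subgroup, $\bar\alpha$ the induced automorphism of $G/N$, and $\bar L$ the cycle length of $gN$ under $\bar\alpha$. Writing $\beta:=\alpha^{\bar L}$, one checks $\beta(gN)=gN$, so the cycle of $g$ splits as $L=\bar L\cdot k$, where $k$ is the length of the cycle of $g$ under $\beta$ inside the coset $gN$; thus $k\le|N|$ and $\bar L\le|G/N|$. Combining these with $L>\tfrac12|G|$ yields simultaneously $\bar L>\tfrac12|G/N|$ and $k>\tfrac12|N|$. The first inequality, together with the minimality of $G$, forces $G/N$ to be abelian. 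Choosing $N$ to be a minimal $\alpha$-invariant normal subgroup contained in $G'$ (which is nontrivial, as $G$ is non-abelian), one deduces $G'\le N$ and hence $G'=N$ is itself a minimal $\alpha$-invariant normal subgroup with $G/G'$ abelian. Writing $V:=G'$, the structure of minimal $\alpha$-invariant normal subgroups gives the dichotomy that either $V$ is abelian, or $V$ is a direct product of isomorphic non-abelian simple groups.

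The heart of the matter is the fibre cycle, of length $k>\tfrac12|V|$, and I expect this to be the main obstacle. I would first rule out the non-solvable configurations: both the base case in which $G$ has no proper nontrivial $\alpha$-invariant normal subgroup (so $G\cong S^m$ for a non-abelian simple $S$) and the case in which $V=G'$ is a product of non-abelian simple groups are excluded because automorphisms of such semisimple groups have order far below half the group order, contradicting $\mathrm{ord}(\alpha)=L>\tfrac12|G|$ (applied to the relevant fibre cycle); hence $G$ is solvable and $V$ is elementary abelian and irreducible under the conjugation action of $G/V$ together with $\alpha$. Since $\beta=\alpha^{\bar L}$ acts trivially on $G/V$, one has $\beta(g)=gv_0$ with $v_0\in V$, and, $V$ being abelian and $\beta$-invariant, the equation $\beta^k(g)=g$ rewrites additively as $(1+B+\cdots+B^{k-1})v_0=0$ with $B:=\beta|_V\in\mathrm{GL}(V)$. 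If $B=\mathrm{id}$ this says $k\,v_0=0$, whence $k\le p$, so $|V|<2p$ and $\dim_{\mathbb{F}_p}V=1$; the general case must combine $\mathrm{ord}(B)\mid k$ with the irreducibility of $V$ and with $\bar L>\tfrac12|G/V|$ to contradict $\bar L\,k=\mathrm{ord}(\alpha)>\tfrac12|G|$. The genuinely delicate point is that cycle lengths of affine maps on $V$ can a priori exceed $\tfrac12|V|$, so crude size bounds do not close the argument; one must exploit the module structure linking $B$, $v_0$ and the quotient action to extract the final contradiction.
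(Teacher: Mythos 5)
First, note that the paper does not actually prove this statement: it is imported verbatim from \cite{Bor15a} (where it is Theorem 1.7, proved over the course of that entire paper), so there is no in-paper argument to compare yours against. Your opening reduction is correct and worth keeping: a cycle $O$ of length $L>\tfrac12|G|$ generates $G$ by Lagrange, lies in the subgroup $\mathrm{per}_L(\alpha)$, and hence forces $\alpha^L=\mathrm{id}$ and $\mathrm{ord}(\alpha)=L$, so the long cycle is automatically regular. The inductive skeleton is also sound: for any proper nontrivial $\alpha$-invariant normal $N$ one does get $\bar L>\tfrac12|G/N|$ and $k>\tfrac12|N|$ from $L=\bar L k$, $\bar L\le|G/N|$, $k\le|N|$, and taking $N$ minimal inside $G'$ correctly yields $G'=V$ characteristically simple with $G/V$ abelian.

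However, the proof has genuine gaps at exactly the two places where the content of the theorem lives, and you acknowledge the second one yourself. (a) The exclusion of the semisimple cases ($G\cong S^m$, or $V=G'$ a product of non-abelian simple groups) rests on the bare assertion that such groups admit no automorphism, respectively no affine map of a coset, with a cycle of length exceeding half the relevant order; this is true but is itself a nontrivial theorem (for a single simple group it already requires bounding element orders of $\mathrm{Aut}(S)$ against $|S|$), and no argument is given. (b) The solvable endgame is left open: you correctly observe that affine maps $v\mapsto v_0+Bv$ on $V=\mathbb{F}_p^d$ \emph{can} have cycles of length greater than $\tfrac12|V|$ (e.g.\ $B$ a Singer cycle on $\mathbb{F}_2^n$), so the configuration you have reached is not self-contradictory --- indeed it is realized by the abelian groups in the classification quoted after the theorem. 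The contradiction must therefore come from the non-triviality of the conjugation action of $G$ on $V=G'$ interacting with $\alpha$, and the central case $V\le Z(G)$ (where $G$ is merely nilpotent of class $2$, not abelian) is not touched by your setup at all. Since both of these steps are where essentially all of the difficulty of the theorem resides, the proposal is a plausible plan rather than a proof.
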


More specifically, it is shown that if $(G,\alpha)$ is such that $G$ is a finite group and $\alpha$ is an automorphism of $G$ with $\lambda(G)>\frac{1}{2}$, then one of the following cases occurs:

(1) $G$ is an elementary abelian $2$-group and there exists a direct decomposition $G=\prod_{i=1}^r{H_i}$ such that $\alpha=\prod_{i=1}^r{\alpha_i}$ for automorphisms $\alpha_i$ of $H_i$ such that $\alpha_i$ permutes all nontrivial elements of $H_i$ in one cycle.

(2) $G$ is a primary cyclic $p$-group $\mathbb{Z}/p^k\mathbb{Z}$ for some odd prime $p$ and $\alpha$ is multiplication with a primitive root modulo $p^k$.

(3) $G$ is an elementary abelian $p$-group $(\mathbb{Z}/p\mathbb{Z})^n$ for some odd prime $p$ and either $n\not=2$ and $\alpha$ permutes all nontrivial elements of $G$ in one cycle, or $n=2$ and $\alpha$ again permutes all nontrivial elements in one cycle or $\alpha$ is given, with respect to an appropriate $\mathbb{F}_p$-basis, by a matrix of the form $A=\begin{pmatrix}-1 & -g^2 \\ 1 & 2g-1\end{pmatrix}$, where $g$ is a generator of $\mathbb{F}_p^{\ast}$.

(4) $G$ is a product of an elementary abelian $2$-group $G_2$ with either a primary cyclic $p$-group or an elementary abelian $p$-group $G_p$ for some odd prime $p$, and $\alpha$ decomposes as a product $\alpha_2\times\alpha_p$ of automorphisms over the two factors, both with a cycle filling more than half of the respective factor as well.

From this classification, we can deduce the following (which was not mentioned in \cite{Bor15a}):

\begin{corollary}\label{fixedPointFreeCor}
Let $G$ be a finite group and let $\alpha$ be an automorphism of $G$ such that $\lambda(\alpha)>\frac{1}{2}$. Then $\alpha$ is fixed-point free.
\end{corollary}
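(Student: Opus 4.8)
The plan is to invoke the explicit classification of pairs $(G,\alpha)$ with $\lambda(\alpha)>\frac12$ recorded above as the refinement of Theorem \ref{largeCycTheo1}, and to verify fixed-point freeness in each of the four cases; here \emph{fixed-point free} means that the neutral element is the only fixed point of $\alpha$. Since $\lambda(G)\geq\lambda(\alpha)>\frac12$, the pair $(G,\alpha)$ must fall into one of the cases (1)--(4), so it suffices to treat these one at a time.

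Cases (1) and most of (3) are immediate: whenever $\alpha$ (or each direct factor $\alpha_i$) permutes all nontrivial elements of the relevant elementary abelian group in a single cycle, the only element left fixed in each factor is the neutral one, and a fixed point of a direct product must be fixed in every coordinate. Case (2), where $G=\mathbb{Z}/p^k\mathbb{Z}$ and $\alpha$ is multiplication by a primitive root $c$ modulo $p^k$, reduces to noting that $c-1$ is a unit of $\mathbb{Z}/p^k\mathbb{Z}$: a primitive root has multiplicative order $p^{k-1}(p-1)$, which does not divide $p^{k-1}$ as $p-1>1$, so $c\not\equiv 1\pmod p$; then $\alpha(g)=g$ forces $(c-1)g\equiv 0$ and hence $g\equiv 0$. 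For case (4), the automorphism splits as $\alpha_2\times\alpha_p$ with each factor possessing a cycle filling more than half of its factor, so each factor falls under cases (1)--(3) and is fixed-point free, whence so is the product.

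The one subtle point is the remaining subcase of (3): $G=(\mathbb{Z}/p\mathbb{Z})^2$ with $\alpha$ given by $A=\begin{pmatrix}-1 & -g^2 \\ 1 & 2g-1\end{pmatrix}$. Here I would compute $\mathrm{tr}(A)=2g-2$ and $\det(A)=(g-1)^2$, so that the characteristic polynomial is $(X-(g-1))^2$ and the sole eigenvalue of $A$ is $g-1$, occurring in a single Jordan block since $\mathrm{rank}(A-(g-1)I)=1$. Thus $1$ is an eigenvalue of $A$ — equivalently $A$ has a nonzero fixed vector — precisely when $g-1=1$, i.e.\ $g=2$. The key observation is that this is incompatible with the hypothesis $\lambda(\alpha)>\frac12$: a single Jordan block with eigenvalue $g-1$ has order $p\cdot\mathrm{ord}(g-1)$ and its longest cycle on $\mathbb{F}_p^2$ has length $p\cdot\mathrm{ord}(g-1)$, so $\lambda(\alpha)=\mathrm{ord}(g-1)/p$; this exceeds $\frac12$ only if $\mathrm{ord}(g-1)>p/2$, and since $\mathrm{ord}(g-1)\mid p-1$ this forces $\mathrm{ord}(g-1)=p-1$, i.e.\ $g-1$ is itself a primitive root. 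As $1$ is not a primitive root for odd $p$, we get $g\neq 2$, so $1$ is not an eigenvalue of $A$ and $\alpha$ is fixed-point free.

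I expect this last subcase to be the main obstacle, since the stated hypothesis that $g$ is a generator does not by itself exclude $g=2$; it is only the quantitative constraint $\lambda(\alpha)>\frac12$ that rules out the eigenvalue $1$, and recognizing that one must pass through the order computation of the Jordan block is the crux of the argument.
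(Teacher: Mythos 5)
Your proof follows the same overall strategy as the paper's: a case-by-case verification against the four-case refinement of Theorem \ref{largeCycTheo1}, with cases (1), (2) and the Singer-cycle part of (3) immediate and case (4) reduced to the others. The one place where you genuinely diverge is the exceptional subcase of (3), and there your extra care is in fact necessary: the paper disposes of it by asserting $\det(A-I)=(g-1)^2\neq 0$, but for the matrix $A=\begin{pmatrix}-1 & -g^2\\ 1 & 2g-1\end{pmatrix}$ as printed one computes $\det(A-I)=(-2)(2g-2)+g^2=(g-2)^2$ (it is $\det(A)$ that equals $(g-1)^2$, and the characteristic polynomial is $(X-(g-1))^2$), so the paper's one-line argument does not by itself rule out a nonzero fixed vector when $g=2$ --- and $2$ is a generator of $\mathbb{F}_p^{\ast}$ for infinitely many odd primes $p$. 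Your supplementary argument --- that $A$ is a single Jordan block with eigenvalue $g-1$, hence has order and longest cycle length $p\cdot\mathrm{ord}(g-1)$, so that $\lambda(\alpha)=\mathrm{ord}(g-1)/p>\frac{1}{2}$ forces $\mathrm{ord}(g-1)=p-1$ and in particular $g\neq 2$ --- correctly closes this gap using only the quantitative hypothesis, exactly as you say. (The alternative repair would be to check against \cite{Bor15a} whether the intended matrix has eigenvalue $g$ rather than $g-1$, in which case the paper's determinant claim becomes literally true.) In short: your proof is correct, takes the same route as the paper, and in the one subcase where it adds material it is more complete than the printed argument.
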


\begin{proof}
This is proved by verifying the assertion in each of the four cases listed above. (1) and (2) are clear. As for the exceptional case in (3), just observe that $\mathrm{det}(A-I)=(g-1)^2\not=0$ in $\mathbb{F}_p$. Finally, the assertion in case (4) follows from the other three cases.
\end{proof}

Another concept introduced in \cite{Bor15a} which we need here is the following:

\begin{definition}\label{affineDef}
Let $G$ be any group, $\varphi$ an endomorphism of $G$ and $g_0\in G$ fixed. The \textbf{(left) affine map of $G$ w.r.t.~$\varphi$ and $g_0$} is the function $\mathrm{A}_{\varphi,g_0}:G\rightarrow G$ mapping $g\mapsto g_0\cdot\varphi(g)$ for $g\in G$.
\end{definition}

The following is a slightly stronger version of Lemma 4.8 in \cite{Bor15a}:

\begin{lemma}\label{affineLem}
Let $G$ be a finite abelian group, let $\alpha$ be an automorphism of $G$ and let $g\in G$. Also, let $o_1$ denote the order of $\alpha$ and let $o_2$ denote the maximum order of a fixed point of $\alpha$. Then $\mathrm{ord}(\mathrm{A}_{\alpha,g})\mid o_1\cdot o_2$.
\end{lemma}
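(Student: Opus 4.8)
The plan is to obtain an explicit closed form for the iterates of $\mathrm{A}_{\alpha,g}$ and then bound its order by checking a single exponent. Since $G$ is abelian I switch to additive notation, so that $\mathrm{A}_{\alpha,g}(x)=g+\alpha(x)$. A routine induction on $n$ (using that $\alpha$ is an endomorphism and that $G$ is abelian, so the accumulated terms may be freely reordered) then yields the formula $\mathrm{A}_{\alpha,g}^n(x)=\alpha^n(x)+\sum_{i=0}^{n-1}\alpha^i(g)$ for all $n\in\mathbb{N}$ and $x\in G$. In particular, $\mathrm{A}_{\alpha,g}^n=\mathrm{id}_G$ holds if and only if both $\alpha^n=\mathrm{id}_G$ and the \enquote{partial orbit sum} $\sum_{i=0}^{n-1}\alpha^i(g)$ vanishes. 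Because showing $\mathrm{ord}(\mathrm{A}_{\alpha,g})\mid o_1o_2$ is equivalent to showing $\mathrm{A}_{\alpha,g}^{o_1o_2}=\mathrm{id}_G$, it suffices to verify these two conditions for the single exponent $n=o_1o_2$.

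The first condition is immediate, since $\alpha^{o_1o_2}=(\alpha^{o_1})^{o_2}=\mathrm{id}_G$ by $o_1=\mathrm{ord}(\alpha)$. For the second, I would partition the index range $\{0,\ldots,o_1o_2-1\}$ into $o_2$ consecutive blocks of length $o_1$ and exploit $\alpha^{o_1}=\mathrm{id}_G$ to collapse each block, obtaining
$$\sum_{i=0}^{o_1o_2-1}\alpha^i(g)=\sum_{j=0}^{o_2-1}\alpha^{jo_1}\!\left(\sum_{i=0}^{o_1-1}\alpha^i(g)\right)=o_2\cdot S,\qquad\text{where } S:=\sum_{i=0}^{o_1-1}\alpha^i(g).$$

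The crux of the argument—and the only step that requires any insight rather than bookkeeping—is the observation that $S$ is a \emph{fixed point} of $\alpha$. Indeed, applying $\alpha$ merely shifts the summation index by one, and since $\alpha^{o_1}(g)=g=\alpha^0(g)$, the new top summand $\alpha^{o_1}(g)$ exactly replaces the bottom summand $\alpha^0(g)$ that is lost, so $\alpha(S)=S$. Now $o_2$ is defined as the maximum order of a fixed point of $\alpha$; equivalently, as $G$ is a finite abelian group, the fixed points of $\alpha$ form a subgroup of $G$ and $o_2$ is its exponent, so the order of every fixed point divides $o_2$. Hence $\mathrm{ord}(S)\mid o_2$, which gives $o_2\cdot S=0$ and therefore makes the partial orbit sum at $n=o_1o_2$ vanish. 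Combined with $\alpha^{o_1o_2}=\mathrm{id}_G$, this shows $\mathrm{A}_{\alpha,g}^{o_1o_2}=\mathrm{id}_G$, and the claimed divisibility $\mathrm{ord}(\mathrm{A}_{\alpha,g})\mid o_1\cdot o_2$ follows.
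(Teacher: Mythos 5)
Your proposal is correct and is essentially the paper's own argument transcribed into additive notation: both derive the closed form for the iterates, collapse the accumulated orbit sum into $o_2$ copies of the single block $S=\sum_{i=0}^{o_1-1}\alpha^i(g)$, observe (using abelianity) that $S$ is a fixed point of $\alpha$, and conclude via $o_2\cdot S=0$. No substantive differences.
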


\begin{proof}
We show that the cycle length of any $a\in A$ under $\mathrm{A}_{\alpha,g}$ divides $o_1\cdot o_2$. It is not difficult to show by induction on $n\in\mathbb{N}$ that $\mathrm{A}_{\alpha,g}^n(a)=\alpha^n(a)\alpha^{n-1}(g)\cdots\alpha(g)g$. Now certainly $\alpha^{o_1\cdot o_2}(a)=a$, so it is equivalent to show that $\alpha^{o_1o_2-1}(g)\cdots\alpha(g)g=1$. This follows from the fact that the LHS is the product, for $k=o_2,\ldots,1$, of the group elements $$\alpha^{ko_1-1}(g)\cdots\alpha^{(k-1)o_1+}(g)\alpha^{(k-1)o_1}(g)=\alpha^{o_1-1}(g)\cdots\alpha(g)g,$$ so setting $x:=\alpha^{o_1-1}(g)\cdots\alpha(g)g$, it is equal to $x^{o_2}$. But by abelianity of $G$, it is readily checked that $x$ is a fixed point of $\alpha$, and we are done.
\end{proof}

We are now ready to show the following:

\begin{theorem}\label{largeCycRCCTheo}
Let $G$ be a finite group and $\alpha$ an automorphism of $G$ such that $\lambda(\alpha)\geq\frac{1}{3}$. Then $\alpha$ has a regular cycle.
\end{theorem}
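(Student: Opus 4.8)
The plan is to argue by contradiction, using the density hypothesis $\lambda(\alpha)\geq\frac13$ to force a suitable $\alpha$-admissible subgroup containing the longest cycle to have index so small that the structural results for the regime $\lambda>\frac12$ become available. Write $o:=\mathrm{ord}(\alpha)$ and let $L$ denote the largest cycle length of $\alpha$, so $L\mid o$ and $L\geq\frac13|G|$. Since $\alpha$ has a regular cycle precisely when $L=o$, I would assume $L<o$ and fix a prime $p$ with $a:=\nu_p(L)<\nu_p(o)=:b$. The first step is to consider $H:=\mathrm{per}_{o/p}(\alpha)$ from Proposition \ref{subgroupProp1}, which is $\alpha$-admissible because $\alpha$ commutes with $\alpha^{o/p}$. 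As the cycle length $\ell_g$ of any $g\in G$ divides $o$, membership $g\in H$ is equivalent to $\nu_p(\ell_g)<b$; thus $H$ is exactly the set of points \emph{not} lying on cycles of maximal $p$-part. In particular the longest cycle lies in $H$, so $|H|\geq L\geq\frac13|G|$, while $H$ is proper since some point realizes $\nu_p(\ell_g)=b$. Hence $[G:H]\in\{2,3\}$.

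Next I would pin the index down to $2$. Writing $\alpha_H:=\alpha|_H$, the longest cycle of $\alpha$ lies in $H$ and no cycle of $\alpha_H$ exceeds $L$, so $L$ is also the largest cycle length of $\alpha_H$ and $\lambda(\alpha_H)=L/|H|$. Were the index $3$, this would give $\lambda(\alpha_H)\geq 1$, forcing $\alpha_H$ to be a single $|H|$-cycle; but $\alpha_H$ fixes the identity, which is impossible for $|H|\geq 2$, and $|H|=1$ would force $L=1$, hence $\alpha=\mathrm{id}$ and $o=1$, contrary to $L<o$. So $[G:H]=2$, $H$ is normal, and since the only cosets are $H$ and $G\setminus H$, the bijection $\alpha$ fixes each setwise. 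Moreover $\lambda(\alpha_H)=2L/|G|\geq\frac23>\frac12$, so Theorem \ref{largeCycTheo1} makes $H$ abelian and Corollary \ref{fixedPointFreeCor} makes $\alpha_H$ fixed-point free; also $\mathrm{ord}(\alpha_H)=L$ by the RCC for abelian groups (Corollary \ref{pCor1}).

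The final step transports the dynamics on the nontrivial coset to an affine map. Picking $y\in G\setminus H$, the fact that $\alpha$ fixes $yH=G\setminus H$ means $h_0:=y^{-1}\alpha(y)\in H$, and for every $h\in H$ one computes $\alpha(yh)=yh_0\alpha_H(h)$. Thus, under the bijection $h\mapsto yh$ of $H$ onto $G\setminus H$, the permutation induced by $\alpha$ on $G\setminus H$ is conjugate to the affine map $\mathrm{A}_{\alpha_H,h_0}$ of $H$. Now Lemma \ref{affineLem} bounds $\mathrm{ord}(\mathrm{A}_{\alpha_H,h_0})$ by $\mathrm{ord}(\alpha_H)\cdot o_2$, where $o_2$ is the maximal order of a fixed point of $\alpha_H$; since $\alpha_H$ is fixed-point free, $o_2=1$, whence $\mathrm{ord}(\mathrm{A}_{\alpha_H,h_0})\mid L$. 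Consequently every point of $G\setminus H$ has cycle length dividing $L$, so $\nu_p(\ell_g)\leq a<b$ there, contradicting $\nu_p(\ell_g)=b$ for all $g\in G\setminus H$. This contradiction yields $L=o$.

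I expect the crux to be the passage from the raw density hypothesis to a situation governed by the $\lambda>\frac12$ theory: the observation that $\mathrm{per}_{o/p}(\alpha)$ is simultaneously large (it absorbs the longest cycle) and proper (it misses the maximal $p$-cycles) is what squeezes its index into $\{2,3\}$, and eliminating the value $3$ via the fixed point of $\alpha_H$ is precisely what guarantees that $\alpha$ stabilizes the nontrivial coset, so that its dynamics there is a \emph{single} affine map. Extracting $o_2=1$ from fixed-point freeness is then the step that collapses the $p$-part and delivers the contradiction; the bound $\frac13$ is the exact threshold that makes this chain work.
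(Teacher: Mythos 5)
Your proof is correct and follows essentially the same route as the paper: a large $\alpha$-admissible subgroup of periodic points is forced to have index $2$, whereupon Theorem \ref{largeCycTheo1}, Corollary \ref{fixedPointFreeCor} and Lemma \ref{affineLem} control the cycle lengths on the nontrivial coset via an affine map. The only (harmless) differences are that the paper works with $\mathrm{per}_{L}(\alpha)$ for $L$ the largest cycle length rather than with $\mathrm{per}_{o/p}(\alpha)$, and rules out index $3$ by counting the identity as a periodic point outside the longest cycle instead of via your single-cycle argument.
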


\begin{proof}
Set $l:=\lambda(\alpha)\cdot|G|$. We need to show that $\mathrm{per}_l(\alpha)=G$. Now $\mathrm{per}_l(\alpha)$ contains all points on any cycle of $\alpha$ of length $l$ as well as the identity element of $G$, and hence $|\mathrm{per}_l(\alpha)|>\frac{1}{3}|G|$. So by Lagrange, the only case left to exclude is $[G:\mathrm{per}_l(\alpha)]=2$. In this case, $\alpha$ restricts to an automorphism $\tilde{\alpha}$ of $\mathrm{per}_l(\alpha)$ with $\lambda(\tilde{\alpha})\geq\frac{2}{3}$, so all cycle lengths of $\tilde{\alpha}$ divide $l$, and by Theorem \ref{largeCycTheo1}, $\mathrm{per}_l(\alpha)$ is abelian. Also, as observed in \cite{Bor15a}, after fixing a representative $x\in G\setminus\mathrm{per}_l(\alpha)$ for the coset $\mathrm{per}_l(\alpha)x$, the action of $\alpha$ corresponds, under the induced identification of elements from $\mathrm{per}_l(\alpha)x$ with elements from $\mathrm{per}_l(\alpha)$, to the action of an affine map $\mathrm{A}_{\tilde{\alpha},g}$ on $\mathrm{per}_l(\alpha)$ for an appropriate $g\in\mathrm{per}_l(\alpha)$. Since by Corollary \ref{fixedPointFreeCor}, $\tilde{\alpha}$ is fixed-point free, by Lemma \ref{affineLem}, all cycle lengths of $\alpha$ on $G\setminus\mathrm{per}_l(\alpha)$ divide $\mathrm{ord}(\tilde{\alpha})=l$, contradicting the proper inclusion of $\mathrm{per}_l(\alpha)$ in $G$.
\end{proof}

\section{The least order of a counterexample}

We shall now show that the smallest group order for which there exist examples of non-RCC groups is 120. Let us denote the number of points whose cycle length under $\alpha$ is precisely $d$ by $\zeta_d(\alpha)$ (so that $\frac{\zeta_d(\alpha)}{d}$ is the number of $d$-cycles of $\alpha$). Using Theorem \ref{dominanceTheo}, we find that if a finite group $G$ has a non-RCC automorphism $\alpha$ whose order is of the form $pqr$ for distinct primes $p,q,r$, the cycle structure of $\alpha$ is completely determined and we can also obtain some information on the structure of $G$:

\begin{lemma}\label{easiestCaseLem}
Let $G$ be a finite group such that for some pairwise distinct primes $p,q,r$, $G$ has an automorphism $\alpha$ which does not satisfy the RCC and whose order equals $pqr$. Then $|G|=4\cdot\zeta_1(\alpha)$, $\zeta_1(\alpha)=\zeta_{pq}(\alpha)=\zeta_{pr}(\alpha)=\zeta_{qr}(\alpha)$, and $\mathrm{fix}(\alpha):=\mathrm{per}_1(\alpha)\unlhd G$.
\end{lemma}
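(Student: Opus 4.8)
The plan is to read the cycle structure of $\alpha$ off the dominance results of the previous section and then to recognise $G$ as a union of three proper subgroups. Since every cycle length divides $\mathrm{ord}(\alpha)=pqr$, a cycle length is determined by the set of primes among $p,q,r$ dividing it; as $\alpha$ has no regular cycle, $\zeta_{pqr}(\alpha)=0$, so a priori the possible cycle lengths are $1,p,q,r,pq,pr,qr$. Applying Theorem \ref{dominanceTheo}(1) to each pair of primes produces a cycle whose length is divisible by the corresponding product, and since the only such divisor of $pqr$ other than the excluded $pqr$ itself is the product, this forces $\zeta_{pq}(\alpha),\zeta_{pr}(\alpha),\zeta_{qr}(\alpha)>0$. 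Next I would rule out the singleton lengths: if some $x$ had cycle length $p$, then pairing it with a point $y$ of length $qr$ (one exists, as $\zeta_{qr}(\alpha)>0$) gives $\nu_p(l_x)>\nu_p(l_y)$, $\nu_q(l_x)<\nu_q(l_y)$ and $\nu_r(l_x)<\nu_r(l_y)$, so applying Lemma \ref{dominanceLem} in all three primes shows the cycle length of $xy$ is divisible by $pqr$, i.e.\ equals $pqr$ — a regular cycle, contradiction. The symmetric arguments (pairing length $q$ with length $pr$, and length $r$ with length $pq$) eliminate all three singletons, so the cycle lengths occurring are exactly $1,pq,pr,qr$.

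Now set $A:=\mathrm{per}_{pq}(\alpha)$, $B:=\mathrm{per}_{pr}(\alpha)$, $C:=\mathrm{per}_{qr}(\alpha)$ and $F:=\mathrm{fix}(\alpha)=\mathrm{per}_1(\alpha)$; these are subgroups of $G$ by Proposition \ref{subgroupProp1}. Because the only cycle lengths are $1,pq,pr,qr$, a point lies in $A$ precisely when its length is $1$ or $pq$, and similarly for $B,C$, so every element of $G$ lies in exactly one of $A,B,C$ with the fixed points lying in all three; hence $G=A\cup B\cup C$. Moreover $A\cap B$ consists of the points whose length divides both $pq$ and $pr$, hence divides $p$; as length $p$ does not occur, $A\cap B=F$, and likewise $A\cap C=B\cap C=A\cap B\cap C=F$. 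Finally $F\subsetneq A,B,C$, since $\zeta_{pq}(\alpha),\zeta_{pr}(\alpha),\zeta_{qr}(\alpha)>0$, so all three subgroups are proper.

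It remains to extract the quotient structure. Inclusion–exclusion over the covering $G=A\cup B\cup C$ with all pairwise and triple intersections equal to $F$ gives $|G|=|A|+|B|+|C|-2|F|$. Rewriting this in terms of the indices $[G:A],[G:B],[G:C],[G:F]$ and feeding in the proper inclusions $F\subsetneq A,B,C$ (so each of $A,B,C$ has index at least $2$ over $F$) together with the elementary bound $[G:F]=[G:A\cap B]\le[G:A]\,[G:B]$, a short estimate forces $[G:A]=[G:B]=[G:C]=2$ and $[G:F]=4$; conceptually this is Scorza's description of a group that is an irredundant union of three proper subgroups, namely $G/F\cong(\mathbb{Z}/2\mathbb{Z})^2$. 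From $[G:F]=4$ we get $|G|=4|F|=4\,\zeta_1(\alpha)$, and from $[G:A]=2$ we get $\zeta_1(\alpha)+\zeta_{pq}(\alpha)=|A|=\tfrac12|G|=2\,\zeta_1(\alpha)$, so $\zeta_{pq}(\alpha)=\zeta_1(\alpha)$ and, symmetrically, $\zeta_{pr}(\alpha)=\zeta_{qr}(\alpha)=\zeta_1(\alpha)$. Since $A,B,C$ have index $2$ they are normal, whence $F=A\cap B\unlhd G$, i.e.\ $\mathrm{fix}(\alpha)\unlhd G$. I expect the main obstacle to be precisely this penultimate step: turning the covering $G=A\cup B\cup C$ with pairwise intersection $F$ into the exact index data $[G:A]=[G:B]=[G:C]=2$, $[G:F]=4$ — that is, pinning down the elementary inequality (or invoking the covering theorem) that excludes every other index pattern.
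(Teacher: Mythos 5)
Your proof is correct, but its second half takes a genuinely different route from the paper's. Both arguments open the same way: Theorem \ref{dominanceTheo}(1) and Lemma \ref{dominanceLem} pin the occurring cycle lengths down to exactly $1,pq,pr,qr$. From there the paper stays with translation arguments: left multiplication by a point on a $pq$-cycle is shown (again via Lemma \ref{dominanceLem}) to interchange the $pr$-points and the $qr$-points while preserving $\mathrm{per}_{pq}(\alpha)$, giving $\zeta_{pr}(\alpha)=\zeta_{qr}(\alpha)$ and, by symmetry, the equality of all three; Lagrange applied to the proper subgroup $\mathrm{per}_{pq}(\alpha)$ (together with $\zeta_1(\alpha)\geq 1$) then forces index $2$ and $\zeta_1(\alpha)=\zeta$; and normality of $\mathrm{fix}(\alpha)$ is obtained by a separate counting argument showing that left and right multiplication by a $pq$-point swap the fixed points and the $pq$-points inside $\mathrm{per}_{pq}(\alpha)$, so that conjugation preserves $\mathrm{fix}(\alpha)$. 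You instead read everything off the covering $G=\mathrm{per}_{pq}(\alpha)\cup\mathrm{per}_{pr}(\alpha)\cup\mathrm{per}_{qr}(\alpha)$ with all pairwise (and the triple) intersections equal to $\mathrm{fix}(\alpha)$: inclusion--exclusion plus the Poincar\'{e} bound $[G:A\cap B]\leq[G:A][G:B]$ does close the estimate you flagged as the main obstacle (writing $1=\tfrac{1}{a}+\tfrac{1}{b}+\tfrac{1}{c}-\tfrac{2}{f}$ with $a,b,c\geq 2$ and $f\leq\min\{ab,ac,bc\}$ leaves only $a=b=c=2$, $f=4$), and Scorza's theorem on irredundant unions of three proper subgroups gives the same conclusion in one stroke. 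Your normality argument (index-$2$ subgroups are normal, hence so is their intersection) is slicker than the paper's conjugation computation. What the paper's bijection technique buys in exchange is reusability: it is invoked again almost verbatim in the proof of Theorem \ref{counterExTheo} to show $\zeta_{p_1^2p_2}(\alpha)=\zeta_{p_1^2p_3}(\alpha)$ in the case $\mathrm{ord}(\alpha)=p_1^2p_2p_3$, where the occurring cycle lengths no longer yield a clean three-subgroup cover of $G$.
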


\begin{proof}
By Theorem \ref{dominanceTheo} and the assumption, $$\zeta_{pq}(\alpha),\zeta_{pr}(\alpha),\zeta_{qr}(\alpha)>0,\zeta_{pqr}(\alpha)=0.$$ $\alpha$ cannot have any $p$-cycles or $q$-cycles or $r$-cycles since otherwise, by Lemma \ref{dominanceLem}, we could obtain points on $pqr$-cycles. Fix any $x$ on a $pq$-cycle of $\alpha$. By Lemma \ref{dominanceLem} and what we already know about the cycle structure of $\alpha$, left multiplication with $x$ must map points on $pr$-cycles to points on $qr$-cycles and vice versa, and no point which does not lie on a $pr$- or a $qr$-cycle of $\alpha$ (and hence lies in $\mathrm{per}_{pq}(\alpha)$) is mapped to such a point. Since left multiplication with $x$ is a permutation of $G$, we conclude that $\zeta_{pr}(\alpha)=\zeta_{qr}(\alpha)$. Similary, one shows $\zeta_{pq}(\alpha)=\zeta_{qr}(\alpha)$, so we get that $$\zeta_{pq}(\alpha)=\zeta_{pr}(\alpha)=\zeta_{qr}(\alpha)=:\zeta.$$ Since $$|\mathrm{per}_{pq}(\alpha)|=\zeta_1(\alpha)+\zeta$$ and $$|G|=\zeta_1(\alpha)+3\zeta,$$ we conclude that $$\zeta_1(\alpha)+3\zeta=2\cdot(\zeta_1(\alpha)+\zeta),$$ or $$\zeta_1=\zeta,$$ from which $$|G|=4\zeta$$ follows. Also, if $y$ is any element of $G$ and $f$ is a fixed point under $\alpha$, it is now not difficult to see that $yxy^{-1}$ is also a fixed point: This holds in general if $y$ is a fixed point, so assume w.l.o.g.~that the cycle length under $\alpha$ of $y$ is $pq$. Now left and right multiplication with any point on a cycle of length $pq$ restricts to a permutation of $\mathrm{per}_{pq}(\alpha)$ that maps fixed points to points of cycle length $pq$; since the number of fixed points coincides with the number of points of cycle length $pq$, these multiplications therefore also map points of cycle length $pq$ to fixed points, and we are done.
\end{proof}

We are now ready to show:

\begin{theorem}\label{counterExTheo}
Let $G$ be a finite non-RCC group such that $|G|\leq 120$. Then $|G|=120$, and any non-RCC automorphism $\alpha$ of $G$ has order $30$ and satisfies $\zeta_1(\alpha)=\zeta_{6}(\alpha)=\zeta_{10}(\alpha)=\zeta_{15}(\alpha)=30$. Furthermore, $G$ is supersolvable.
\end{theorem}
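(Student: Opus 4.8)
The plan is to nail down the order and the full cycle statistics of a hypothetical non-RCC automorphism $\alpha$ by trapping $|G|$ between an arithmetic lower bound and the hypothesis $|G|\le 120$, and only afterwards to read off the group structure. Write $o:=\mathrm{ord}(\alpha)$. Since $\alpha$ is non-RCC, Theorem \ref{dominanceTheo}(2) forces $o$ to be divisible by at least three distinct primes, while Theorem \ref{largeCycRCCTheo} forces $\lambda(\alpha)<\frac13$, so every cycle length of $\alpha$ is $<|G|/3\le 40$. Feeding this into Theorem \ref{dominanceTheo}(1) — which for each pair of distinct primes $p,q\mid o$ produces a cycle of length divisible by $p^{\nu_p(o)}q^{\nu_q(o)}$ — gives the key inequalities $p^{\nu_p(o)}q^{\nu_q(o)}\le 39$ for all such pairs.

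From this system I would extract the candidate list for $o$. Ordering the prime-power factors of $o$ as $P_{(1)}\le\cdots\le P_{(k)}$, the constraint $P_{(k-1)}P_{(k)}\le 39$ gives $P_{(k-1)}\le\sqrt{39}<6.25$, so every factor except possibly the largest is at most $5$; since prime powers $\le 5$ come only from the primes $2,3,5$, this forces $k\le 4$ and confines $o$ to a short explicit list (primes among $\{2,3,5,7,11,13\}$ with tightly bounded exponents, e.g.\ $o\in\{30,42,60,66,70,78,84,105,140,210,420\}$). The entire substance of the theorem then lies in eliminating every candidate except $o=30$.

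This elimination is the step I expect to be the genuine obstacle. The crude estimate $|G|\ge 1+\sum(\text{provably distinct cycles})$, where two cycles forced by different prime pairs must differ once the lcm of their lengths exceeds $40$, disposes only of the heaviest candidate $o=420$ (whose six forced cycles of lengths $12,20,28,15,21,35$ already sum past $120$); lighter candidates such as $o=210$, $o=140$ or $o=60$ survive pure counting and demand the finer argument. For these I would run the balancing argument of Lemma \ref{easiestCaseLem} in general form, using that each $\mathrm{per}_e(\alpha)$ is a subgroup (Proposition \ref{subgroupProp1}), so its order divides $|G|$ by Lagrange, and that left multiplication by a fixed element of a prescribed cycle-type permutes the surviving cycle-types (Lemma \ref{dominanceLem}). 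This forces the relevant cycle-counts to be equal and divisible by the lcm of the pairwise prime-power products; carried out candidate by candidate, it pushes $|G|$ strictly above $120$ in every case except $o=30$ with prime set $\{2,3,5\}$.

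With $o=30$ established, the quantitative claims follow at once from Lemma \ref{easiestCaseLem}: it yields $|G|=4\zeta$ with $\zeta:=\zeta_1(\alpha)=\zeta_6(\alpha)=\zeta_{10}(\alpha)=\zeta_{15}(\alpha)$, and since each of $6,10,15$ divides the respective count we get $30=\mathrm{lcm}(6,10,15)\mid\zeta$, whence $|G|=4\zeta\ge 120$; combined with $|G|\le 120$ this gives $|G|=120$, $\zeta=30$ and $\mathrm{ord}(\alpha)=30$, exactly the asserted values. For supersolvability I would use that $N:=\mathrm{fix}(\alpha)\unlhd G$ has order $\zeta=30$. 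A group of order $30$ has a unique, hence characteristic, cyclic subgroup $C$ of order $15$, so $C$ is characteristic in $N$ and therefore $C\unlhd G$; the subgroups of orders $5$ and $3$ of the cyclic group $C$ are characteristic in $C$ and are full Sylow subgroups of $G$, hence normal in $G$. Denoting the order-$5$ one by $P_5$, the quotient $G/C$ is a $2$-group of order $8$, thus nilpotent and supersolvable, so a chief series of $G/C$ pulls back to $G$-normal subgroups with cyclic factors of order $2$; appending the terms $1\unlhd P_5\unlhd C$ produces a normal series of $G$ with cyclic factors, proving that $G$ is supersolvable.
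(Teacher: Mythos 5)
Your strategy is the same as the paper's: Theorem \ref{dominanceTheo}(2) forces at least three prime divisors of $o$, Theorem \ref{largeCycRCCTheo} caps every cycle length of $\alpha$ at $39$, Theorem \ref{dominanceTheo}(1) then yields $p^{\nu_p(o)}q^{\nu_q(o)}\le 39$ for each pair of primes dividing $o$, and Lemma \ref{easiestCaseLem} finishes once $o=30$ is known. Your candidate list for $o$ is correct, and your endgame --- the divisibility argument giving $\zeta=30$, $|G|=120$, $\mathrm{ord}(\alpha)=30$, and the supersolvability argument via the characteristic cyclic subgroup of order $15$ of $\mathrm{fix}(\alpha)$ --- is complete and in fact more detailed than the paper's, which merely invokes metacyclicity of groups of order $30$. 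The paper organises the elimination structurally (first pinning down the number of prime divisors, then the exponents) rather than via an explicit candidate list, but the content is identical.

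The one genuine shortfall is that the elimination of the candidates $o\ne 30$, which is where essentially all of the work lies, is asserted rather than carried out. The assertion is true and your toolkit does suffice, but the candidates split into three cases needing visibly different computations, and you should write them out. (i) The squarefree three-prime candidates $42,66,70,78,105$ die instantly from Lemma \ref{easiestCaseLem} alone: $|G|=4\zeta$ with $p_1p_2p_3\mid\zeta$ already forces $|G|\ge 4p_1p_2p_3>120$; no further balancing is needed, contrary to your suggestion that only $420$ falls to "pure counting". (ii) For $60,84,140$ (where $\nu_2(o)=2$) Lemma \ref{easiestCaseLem} does not apply as stated and the balancing must be re-run: a point of cycle length $p_2p_3$ or $p_1p_2p_3$ interchanges, under left multiplication, the points of cycle lengths $p_1^2p_2$ and $p_1^2p_3$ (both of which are the only admissible lengths $\le 39$ divisible by those numbers), whence $\zeta_{p_1^2p_2}=\zeta_{p_1^2p_3}$ is divisible by $p_1^2p_2p_3\ge 60$ and $|G|>120$. (iii) The candidate $210$ survives both counting and any single application of Lemma \ref{dominanceLem}; one must iterate, e.g.\ a point of cycle length $14$ times a point of cycle length $21$ has, by the lemma, cycle length divisible by $6$ and dividing $42$, hence equal to $6$ since $42>39$, and multiplying that product by a point of cycle length $35$ forces a cycle length divisible by $2\cdot 3\cdot 5\cdot 7=210$, a contradiction. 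Once these computations are written down, your proof is complete.
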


\begin{proof}
It suffices to show that under the assumption $|G|\leq 120$, any non-RCC-automorphism of $G$ has order $30$; the rest follows from Lemma \ref{easiestCaseLem} (as well as the fact that groups of order $30$, as all groups whose Sylow subgroups are all abelian, are metacyclic, see Theorem 10.1.10 in \cite{Rob96a}). For this, in turn, it suffices to show that the order of any non-RCC automorphism of $G$ is of the form $p_1p_2p_3$ for primes $p_1<p_2<p_3$, since $\zeta:=\zeta_{p_1p_2}>0$ must, as the number of points on cycles of length $p_1p_2$, be divisible by $p_1p_2$, but it must also, as the number of points on cycles of length $p_1p_3$, be divisible by $p_1p_3$, whence it is a multiple of $p_1p_2p_3$, and $|G|\geq 4\cdot p_1p_2p_3$, which can only work out if $p_1=2,p_2=3,p_3=5$.

So let $\alpha$ be a non-RCC-automorphism of $G$. Note that by Theorem \ref{largeCycRCCTheo}, we obtain a contradiction is soon as we can derive that $\alpha$ has a cycle of length at least $40$. We first show that the order of $\alpha$ is divisible by precisely three distinct primes. By Theorem \ref{dominanceTheo}(2), it must be divisible by at least three distinct primes. If it was divisible by at least five distinct primes, say $p_1,\ldots,p_n$ in increasing order, then by Theorem \ref{dominanceTheo}(1), $\alpha$ would have a cycle of length bounded below by $p_{n-1}p_n\geq 7\cdot 11=77$, a contradiction. And if it is divisible by precisely four distinct primes, say $p_1<p_2<p_3<p_4$, it follows immediately that $p_1=2,p_2=3,p_3=5,p_4=7$ (since otherwise, we would again obtain a cycle of too large length) and $\mathrm{ord}(\alpha)=2^{k_1}\cdot3\cdot5\cdot7$, with $k_1\in\{1,2\}$. Now by Theorem \ref{dominanceTheo}(2), $\alpha$ has a cycle of length divisible by $35$. If it is additionally divisible by $2$ or $3$, it would be too large, so $\alpha$ actually has a cycle of length precisely $35$. But $\alpha$ also has a cycle of length divisible by $6$, and that cycle cannot have length equal to $6$, since otherwise, $\alpha$ would have a regular cycle by Lemma \ref{dominanceLem}. Hence there either is a cycle of length $30$ or $42$. The latter case is immediately contradictory, but in the first case, by Lemma \ref{dominanceLem}, multiplying a point on a cycle of length $35$ with a point with cycle length $30$ also yields a point whose cycle length is divisible by $2\cdot3\cdot7=42$, a contradiction. This proves that $\mathrm{ord}(\alpha)$ is divisible by precisely three distinct primes.

So say $\mathrm{ord}(\alpha)=p_1^{k_1}p_2^{k_2}p_3^{k_3}$ for primes $p_1<p_2<p_3$. Again in view of Theorem \ref{largeCycRCCTheo}, we conclude that $k_2=k_3=1$ and $k_1\in\{1,2\}$. It remains to exclude the case $k_1=2$. In that case, $\zeta_{p_1^2p_2}(\alpha)\zeta_{p_1^2p_3}(\alpha)>0$, $\zeta_{p_2p_3}(\alpha)+\zeta_{p_1p_2p_3}(\alpha)>0$ and all other potential cycle lengths of $\alpha$ divide $p_1p_2p_3$. Hence just as in the proof of Lemma \ref{easiestCaseLem}, fixing a point on a cycle of length $p_2p_3$ or $p_1p_2p_3$ and considering its left multiplication, we find that $\zeta_{p_1^2p_2}(\alpha)=\zeta_{p_1^2p_3}(\alpha)=:\zeta$ and can conclude that $\zeta\geq p_1^2p_2p_3\geq 2^2\cdot 3\cdot 5=60$, the final contradiction for this proof.
\end{proof}

On the other hand, we carried out a brute-force search with GAP \cite{GAP4} for non-RCC groups of order 120. The search revealed that of the $44$ isomorphism types of nonabelian groups of order $120$, precisely $8$ are non-RCC. The second parts of their GAP IDs are the numbers $8,9,10,11,12,13,14$ and $42$. We shall now work out an explicit proof that $\mathrm{SmallGroup}(120,8)$ is a counter-example:

\begin{proposition}\label{counterExProp}
The group $G:=\mathrm{SmallGroup}(120,8)$ does not satisfy the RCC.
\end{proposition}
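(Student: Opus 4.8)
The plan is to realize the required non-RCC automorphism explicitly, guided by Theorem \ref{counterExTheo}: we must exhibit an $\alpha\in\mathrm{Aut}(G)$ of order $30$ whose only nonidentity cycle lengths are $6$, $10$ and $15$, each occurring on $30$ points, so that $\zeta_1(\alpha)=\zeta_6(\alpha)=\zeta_{10}(\alpha)=\zeta_{15}(\alpha)=30$ and, in particular, no cycle has length $30$. To this end I would first extract from GAP a description of $G=\mathrm{SmallGroup}(120,8)$ exhibiting a normal cyclic subgroup $N=\langle c\rangle\cong\mathbb{Z}/30\mathbb{Z}$ with $G/N\cong(\mathbb{Z}/2\mathbb{Z})^2$, where $G$ is generated over $N$ by two commuting involutions $s,t$ acting on $N$ by inverting prescribed primary components: writing $c_2=c^{15}$, $c_3=c^{10}$, $c_5=c^{6}$ for the primary generators, one arranges that $s$ inverts the $3$- and $5$-parts (hence acts as global inversion on $N$, since $c_2$ is its own inverse), $t$ inverts only the $5$-part, and consequently $st$ inverts only the $3$-part. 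This $N$ is destined to be $\mathrm{fix}(\alpha)$.

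Next I would define $\alpha$ to fix $N$ pointwise and to send $s\mapsto n_ss$ and $t\mapsto n_tt$, with the specific choices $n_s:=c_3c_5$ (of order $15$) and $n_t:=c_2c_5$ (of order $10$). The first task is to check that this assignment respects the defining relations of $G$ and hence extends to an endomorphism. The relations $s^2=t^2=1$ amount to ${}^{s}n_s=n_s^{-1}$ and ${}^{t}n_t=n_t^{-1}$; the former holds because $s$ inverts $N$, and the latter because $n_t$ has trivial $3$-part while $t$ inverts the $5$-part and fixes the $2$-part. The relation $[s,t]=1$ reduces to the identity $n_s\cdot{}^{s}n_t=n_t\cdot{}^{t}n_s$, both sides evaluating to $c_2c_3$, and the conjugation relations describing the action of $s,t$ on $N$ are automatic because $\alpha$ fixes $N$ pointwise and $N$ is abelian. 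That $\alpha$ is then genuinely an automorphism of order exactly $30$ follows from Proposition \ref{subgroupProp1} applied to the generating set $\{c,s,t\}$: one has $\alpha^{k}(c)=c$ for all $k$, while $\alpha^{k}(s)=n_s^{k}s$ and $\alpha^{k}(t)=n_t^{k}t$, so $s$ returns after exactly $15$ steps and $t$ after exactly $10$, giving $\mathrm{ord}(\alpha)=\mathrm{lcm}(15,10)=30$.

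Finally I would read off the cycle structure coset by coset. Since $\alpha$ fixes $N$ pointwise, for $m\in N$ one computes $\alpha^{k}(ms)=m\,n_s^{k}s$, so every element of the coset $Ns$ lies on a cycle of length $\mathrm{ord}(n_s)=15$; likewise $Nt$ consists entirely of cycles of length $\mathrm{ord}(n_t)=10$, and $Nst$ of cycles of length $\mathrm{ord}(n_s\cdot{}^{s}n_t)=\mathrm{ord}(c_2c_3)=6$, while $N$ itself contributes the $30$ fixed points. This yields $\zeta_1(\alpha)=\zeta_6(\alpha)=\zeta_{10}(\alpha)=\zeta_{15}(\alpha)=30$ and $\mathrm{ord}(\alpha)=\mathrm{lcm}(6,10,15)=30$ with no regular cycle, as required. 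I expect the main obstacle to be not any single computation but the preliminary bookkeeping: correctly putting the opaque GAP group into the normal form $N\rtimes\langle s,t\rangle$ above and verifying that the proposed $\alpha$ --- which must be \emph{outer}, since an inner automorphism fixing $N$ pointwise is conjugation by an element of $N$ and would force the cycle length on $Nt$ to divide $5$ rather than equal $10$ --- really does preserve every defining relation.
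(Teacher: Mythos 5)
Your argument breaks down at exactly the step you flag as the main obstacle: the normal form you assume for $G=\mathrm{SmallGroup}(120,8)$ is not correct, so the automorphism you construct lives on a different group of order $120$. It is true that $G$ has a normal cyclic subgroup $N\cong\mathbb{Z}/30\mathbb{Z}$ with $G/N\cong(\mathbb{Z}/2\mathbb{Z})^2$, but that extension does not split. In the presentation the paper extracts from GAP, $G\cong\mathbb{Z}/5\mathbb{Z}\rtimes(\mathbb{Z}/3\mathbb{Z}\rtimes(\mathbb{Z}/2\mathbb{Z}\times\mathbb{Z}/4\mathbb{Z}))$ with $x_4$ of order $4$ inverting the normal $\mathbb{Z}/3\mathbb{Z}$ and centralizing the normal $\mathbb{Z}/5\mathbb{Z}$; here $N=\langle x_1,x_2,x_4^2\rangle$, and a short computation gives $(nx_4)^2=x_1^{2a}x_4^2\not=1$ and $(nx_3x_4)^2=x_4^2\not=1$ for every $n=x_1^ax_2^bx_4^{2c}\in N$, so neither coset outside $N\langle x_3\rangle$ contains an involution and no complement $\langle s,t\rangle\cong(\mathbb{Z}/2\mathbb{Z})^2$ exists. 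More bluntly: the Sylow $2$-subgroup of $\mathrm{SmallGroup}(120,8)$ is $\mathbb{Z}/2\mathbb{Z}\times\mathbb{Z}/4\mathbb{Z}$, whereas the group you describe, $\mathbb{Z}/30\mathbb{Z}\rtimes(\mathbb{Z}/2\mathbb{Z})^2$ with two commuting involutions inverting the prescribed primary components, is isomorphic to $\mathbb{Z}/2\mathbb{Z}\times\mathcal{S}_3\times D$ (with $D$ the dihedral group of order $10$) and has elementary abelian Sylow $2$-subgroups. These groups are not isomorphic, so no amount of bookkeeping will put $\mathrm{SmallGroup}(120,8)$ into your form.

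Everything downstream of the misidentification is correct: your map $s\mapsto n_ss$, $t\mapsto n_tt$ with $n_s=c_3c_5$, $n_t=c_2c_5$ does preserve the relations of $\mathbb{Z}/30\mathbb{Z}\rtimes(\mathbb{Z}/2\mathbb{Z})^2$, is an outer automorphism of order $30$ for the reason you give, and has cycle lengths $1,15,10,6$ on the four cosets of $N$, so it is a genuine non-RCC automorphism. You have thus proved that one of the \emph{other} non-RCC groups of order $120$ from the paper's GAP search fails the RCC; for Theorem \ref{counterExTheo2} (existence and minimality of order $120$) this is just as good, but it does not prove the proposition as stated. To handle $\mathrm{SmallGroup}(120,8)$ itself you must work with the non-split datum, as the paper does: it writes the group law explicitly on the set $\mathbb{Z}/5\mathbb{Z}\times\mathbb{Z}/3\mathbb{Z}\times\mathbb{Z}/2\mathbb{Z}\times\mathbb{Z}/4\mathbb{Z}$ and defines $\alpha$ by $x_3\mapsto x_1x_3x_4^2$, $x_4\mapsto x_2x_4^3$; this $\alpha$ still fixes $N$ pointwise and yields cycle lengths $10$, $6$ and $15$ on the three nontrivial cosets, so your overall strategy is the right one once the group is identified correctly.
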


\begin{proof}
We analyze $G$ from a presentation of it stored and outputted by GAP: $$G=\langle F_1,F_2,F_3,F_4,F_5\mid F_1^2=1,[F_1,F_2]=1,[F_1,F_3]=1,[F_1,F_4]=1,F_1F_5F_1^{-1}=F_5^4,$$$$F_3=F_2^2,[F_2,F_3]=1,F_2^{-1}F_4F_2=F_4^2,[F_2,F_5]=1,F_3^2=1,[F_3,F_4]=1,[F_3,F_5]=1,$$$$F_4^3=1,[F_4,F_5]=1,F_5^5=1\rangle.$$ As the generator $F_3$ is made superfluous by the relation $F_3=F_2^2$, we can remove it in the course of a Tietze transformation and obtain the following more concise presentation of $G$ (where we also omitted some (substituted) relations that follow from others): $$G=\langle F_1,F_2,F_4,F_5\mid F_1^2=1,[F_1,F_2]=1,[F_1,F_4]=1,F_1F_5F_1^{-1}=F_5^{-1},F_2^{-1}F_4F_2=F_4^{-1},$$$$[F_2,F_5]=1,F_2^4=1,F_4^3=1,[F_4,F_5]=1,F_5^5=1\rangle.$$ Now consider the following group presentations: $$\langle F_5\mid F_5^5=1\rangle$$ of $\mathbb{Z}/5\mathbb{Z}$, $$\langle F_4\mid F_4^3=1\rangle$$ of $\mathbb{Z}/3\mathbb{Z}$ and $$\langle F_1,F_2\mid F_1^2=1,F_2^4=1,[F_1,F_2]=1\rangle$$ of $\mathbb{Z}/2\mathbb{Z}\times\mathbb{Z}/4\mathbb{Z}$. We can obtain the second presentation of $G$ from these three in two steps: First, form the semidirect product of the second with the third of the introduced presentations by taking their disjoint union and adding the conjugation relations $[F_1,F_4]=1$ and $F_2^{-1}F_4F_2=F_4^{-1}$; this gives a presentation of a semidirect product $\mathbb{Z}/3\mathbb{Z}\rtimes(\mathbb{Z}/2\mathbb{Z}\times\mathbb{Z}/4\mathbb{Z})$. Secondly, form the semidirect product of the first introduced presentation with the just formed semidirect product presentation by taking their disjoint union and adding the conjugation relations $F_1F_5F_1^{-1}=F_5^{-1},[F_2,F_5]=1$ and $[F_4,F_5]=1$. This proves that $G$ is the semidirect product $\mathbb{Z}/5\mathbb{Z}\rtimes(\mathbb{Z}/3\mathbb{Z}\rtimes(\mathbb{Z}/2\mathbb{Z}\times\mathbb{Z}/4\mathbb{Z}))$, where, renaming the generators of the four canonical cyclic subgroups in the order as they appear in that notation by $x_1,x_2,x_3,x_4$, we have the conjugation relations $[x_1,x_2]=[x_1,x_4]=[x_2,x_3]=1,x_3x_1x_3^{-1}=x_1^{-1},x_4x_2x_4^{-1}=x_2^{-1}$.

By the normal form theorem for semidirect products, we can view the underlying set of the group $\mathbb{Z}/5\mathbb{Z}\times\mathbb{Z}/3\mathbb{Z}\times\mathbb{Z}/2\mathbb{Z}\times\mathbb{Z}/4\mathbb{Z}$ also as an underlying set for $G$, where the group operation $\cdot$ of $G$ is given by $$(k_1,k_2,k_3,k_4)\cdot(l_1,l_2,l_3,l_4)=(k_1+(-1)^{k_3}l_1,k_2+(-1)^{k_4}l_2,k_3+l_3,k_4+l_4).$$

Now consider the map (also found by a brute-force search with GAP) $\{x_1,x_2,x_3,x_4\}\rightarrow G$ given by $$x_1\mapsto x_1,x_2\mapsto x_2,x_3\mapsto x_1x_3x_4^2,x_4\mapsto x_2x_4^3.$$ It is readily checked that it respects the defining relations and hence extends to an endomorphism $\alpha$ of $G$. Also, one easily verifies the following: $$\alpha(k_1,k_2,k_3,k_4)=\begin{cases}(k_1+k_3,k_2,k_3,-k_4+2k_3) & \mbox{if }2\mid k_4, \\ (k_1+k_3,k_2+1,k_3,-k_4+2k_3) & \mbox{if }2\nmid k_4\end{cases}.$$ From this, it follows immediately that $\alpha$ has trivial kernel and hence is an automorphism of $G$. Also, note that the parity of the fourth component is invariant under $\alpha$. This makes it easy to determine the cycle length under $\alpha$ of an arbitrary group element $(k_1,k_2,k_3,k_4)$ in a case distinction:

If $k_4$ is even, then by an easy induction on $n$, for all $n\in\mathbb{N}$, we have that $$\alpha^n(k_1,k_2,k_3,k_4)=(k_1+n\cdot k_3,k_2,k_3,k_4+2n\cdot k_3).$$ In the subcase $k_3=0$, this formula simplifies to $$\alpha^n(k_1,k_2,0,k_4)=(k_1,k_2,0,k_4),$$ so the $5\cdot 3\cdot 2=30$ points of that form are fixed points under $\alpha$. And if $k_3=1$, the formula becomes $$\alpha^n(k_1,k_2,1,k_4)=(k_1+n,k_2,1,k_4+2n),$$ so apparently, all these $30$ points have cycle length $10$ under $\alpha$.

On the other hand, if $k_4$ is odd, we find that $$\alpha^n(k_1,k_2,k_3,k_4)=(k_1+n\cdot k_3,k_2+n,k_3,(-1)^nk_4+2n\cdot k_3).$$ For $k_3=0$, we thus have $$\alpha^n(k_1,k_2,0,k_4)=(k_1,k_2+n,0,(-1)^nk_4),$$ giving us $30$ points of cycle length $6$ under $\alpha$, and for $k_3=1$, the formula turns into $$\alpha^n(k_1,k_2,1,k_4)=(k_1+n,k_2+n,1,(-1)^nk_4+2n),$$ which yields $30$ points of cycle length $15$, and we conclude that $\alpha$ does not satisfy the RCC.
\end{proof}

Combining what we now know, we obtain:

\begin{theorem}\label{counterExTheo2}
There exist infinitely many finite non-RCC groups, the smallest of which have order $120$.\qed
\end{theorem}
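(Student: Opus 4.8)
The plan is to assemble the three ingredients already established above, since each assertion of the statement is an essentially immediate consequence of an earlier result. The theorem bundles together three claims: that at least one finite non-RCC group exists, that the minimal order among all such groups is exactly $120$, and that there are infinitely many of them. I would treat these in turn.

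For the existence of a non-RCC group \emph{of order $120$}, I would invoke Proposition \ref{counterExProp}, which exhibits $G_0:=\mathrm{SmallGroup}(120,8)$ together with an explicit automorphism $\alpha$ whose cycle lengths are $1,6,10,15$; since $\mathrm{ord}(\alpha)=\mathrm{lcm}(6,10,15)=30$ while no cycle has length $30$, the automorphism $\alpha$ has no regular cycle, so $G_0$ is a finite non-RCC group. For sharpness from below, I would appeal to Theorem \ref{counterExTheo}: every finite non-RCC group $G$ with $|G|\leq 120$ must satisfy $|G|=120$, so no finite group of order strictly smaller than $120$ can be non-RCC. Together these two facts show that $120$ is precisely the least order at which a finite non-RCC group occurs.

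For the infinitude, I would use Remark \ref{splitRem} (a direct consequence of Lemma \ref{splitLem}(1)): having fixed the non-RCC group $G_0$, each direct product $G_0\times\mathbb{Z}/n\mathbb{Z}$ is again non-RCC for every $n\in\mathbb{N}^+$, and these groups are pairwise non-isomorphic as $n$ varies, since they have distinct orders $120n$. This produces infinitely many pairwise non-isomorphic finite non-RCC groups and completes the proof.

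No genuine difficulty remains at this stage. All of the substantive work is already carried out in Theorem \ref{counterExTheo}, whose cycle-structure analysis forces a minimal counterexample to admit an order-$30$ automorphism and hence to have group order $4\cdot 30=120$, and in the explicit verification of Proposition \ref{counterExProp}. The only point requiring care is to observe that the lower bound of Theorem \ref{counterExTheo} is in fact attained, which is exactly what the explicit example of order $120$ guarantees; the present statement is then purely a matter of combining these facts.
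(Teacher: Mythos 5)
Your proposal is correct and matches the paper's intent exactly: the theorem is stated with a \qed precisely because it follows by combining Proposition \ref{counterExProp} (an explicit non-RCC group of order $120$), Theorem \ref{counterExTheo} (no non-RCC group of order $<120$ exists), and Remark \ref{splitRem} via Lemma \ref{splitLem}(1) (infinitude by taking direct products). Nothing is missing.
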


\begin{corollary}\label{extCor}
The class of RCC-groups is not closed under extensions.
\end{corollary}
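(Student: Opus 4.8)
The plan is to exhibit a single non-RCC group that is at the same time an extension of one RCC-group by another; this immediately witnesses the failure of closure under extensions. The obvious candidate is the group produced in Proposition \ref{counterExProp}, namely $G:=\mathrm{SmallGroup}(120,8)$, which we have already shown to be non-RCC. So the entire content of the corollary reduces to finding an $\unlhd$-normal subgroup $N$ of $G$ such that both $N$ and $G/N$ are RCC-groups.

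The crucial leverage comes from the order bound established earlier in this section. By Theorem \ref{counterExTheo} (equivalently, by Theorem \ref{counterExTheo2}), no finite group of order strictly less than $120$ can be non-RCC, i.e.\ every finite group of order below $120$ satisfies the RCC. Consequently I would only need to locate a proper nontrivial normal subgroup $N\unlhd G$: then both $N$ and the quotient $G/N$ have order strictly between $1$ and $120$, hence are RCC-groups, while $G$ itself is not. Such an $N$ is readily available: $G$ is supersolvable (Theorem \ref{counterExTheo}), hence solvable, and of composite order, so it is not simple. Most concretely, the explicit decomposition $G=\mathbb{Z}/5\mathbb{Z}\rtimes(\mathbb{Z}/3\mathbb{Z}\rtimes(\mathbb{Z}/2\mathbb{Z}\times\mathbb{Z}/4\mathbb{Z}))$ from the proof of Proposition \ref{counterExProp} exhibits the normal Sylow $5$-subgroup $\langle x_1\rangle\cong\mathbb{Z}/5\mathbb{Z}$ as a suitable choice, with quotient of order $24$.

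There is essentially no technical obstacle here, since all the substantive work has already been carried out: the construction of the counterexample in Proposition \ref{counterExProp} and the determination of the minimal non-RCC order in Theorems \ref{counterExTheo} and \ref{counterExTheo2}. The only point demanding a little care is the logical step that converts the numerical statement \enquote{the smallest non-RCC order is $120$} into the usable form \enquote{every finite group of order below $120$ is an RCC-group}, which is precisely what licenses declaring $N$ and $G/N$ to be RCC-groups. I would conclude by verifying that the chosen $N$ is genuinely normal, proper, and nontrivial, and then observe that $G$ is an extension of the RCC-group $G/N$ by the RCC-group $N$ which nevertheless fails the RCC.
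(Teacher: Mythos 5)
Your proposal is correct, but it argues differently from the paper. The paper proves the corollary by contradiction: if RCC-groups were closed under extensions, then, since all finite abelian groups are RCC-groups by Corollary \ref{pCor1}, every finite solvable group (being obtained from the trivial group by iterated extensions with abelian kernels) would be an RCC-group; this contradicts the existence of the solvable non-RCC group of order $120$ from Theorems \ref{counterExTheo} and \ref{counterExTheo2}. You instead exhibit a single explicit short exact sequence $1\to N\to G\to G/N\to 1$ with $G=\mathrm{SmallGroup}(120,8)$ non-RCC and both $N$ and $G/N$ RCC, certifying the latter two via the minimality statement of Theorem \ref{counterExTheo} (any proper nontrivial normal subgroup and its quotient have order strictly below $120$, hence are RCC). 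Both routes lean on the same substantive inputs; the paper's is shorter but implicitly iterates the closure hypothesis along a subnormal series, whereas yours is more concrete and produces an explicit witness in one step. A small remark: for your specific choice one does not even need the full strength of Theorem \ref{counterExTheo} --- taking $N=\langle x_1,x_2\rangle\cong\mathbb{Z}/15\mathbb{Z}$ (normal by the stated conjugation relations) gives $G/N\cong\mathbb{Z}/2\mathbb{Z}\times\mathbb{Z}/4\mathbb{Z}$, so both $N$ and $G/N$ are nilpotent and Corollary \ref{pCor1} alone suffices; and your phrase \enquote{extension of $G/N$ by $N$} uses one of the two competing conventions for that terminology, which is harmless here since the underlying exact sequence is unambiguous.
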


\begin{proof}
If it was, then by Corollary \ref{pCor1}, all finite solvable groups would be RCC-groups. However, by Theorem \ref{counterExTheo2}, there exists a non-RCC group of order $120$, which by Theorem \ref{counterExTheo} is solvable.
\end{proof}

\begin{corollary}\label{nearRingCor}
There exist infinitely many finite non-RCC (right) nearrings.
\end{corollary}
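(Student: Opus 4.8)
The goal is Corollary \ref{nearRingCor}: there exist infinitely many finite non-RCC (right) nearrings. The plan is to leverage the group-theoretic counterexamples already established, exploiting the fact that every group gives rise to a nearring in a canonical way. Recall that a (right) nearring is a set with addition (forming a group, not necessarily abelian) and a multiplication that is associative and right-distributive over addition, i.e. $(a+b)c = ac+bc$, but with no left-distributive law assumed. The automorphisms of a nearring are the bijections respecting both operations, so they form a subgroup of the automorphism group of the additive group. The key idea is that for any group $(G,+)$, one can define a multiplication on $G$ making it a nearring whose nearring-automorphisms coincide exactly with the group-automorphisms of $(G,+)$; then a non-RCC additive group yields a non-RCC nearring.

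First I would recall the standard construction of a nearring from a group. Given a group $(G,+)$, fix a distinguished element (say the identity $0$) and define the multiplication by $a\cdot b := a$ for all $a,b\in G$ (the ``left trivial'' multiplication), or alternatively a multiplication built from a fixed endomorphism. One checks that $a\cdot b := a$ is associative, $(a\cdot b)\cdot c = a = a\cdot(b\cdot c)$, and right-distributive, $(a+b)\cdot c = a+b = a\cdot c + b\cdot c$, so $(G,+,\cdot)$ is a right nearring. The crucial point is that this multiplication is definable purely from the additive structure together with no extra data beyond the underlying set, so \emph{every} additive-group automorphism automatically preserves $\cdot$: if $\varphi$ is an additive automorphism then $\varphi(a\cdot b)=\varphi(a)=\varphi(a)\cdot\varphi(b)$. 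Conversely every nearring automorphism is in particular an additive automorphism. Hence $\mathrm{Aut}(G,+,\cdot)=\mathrm{Aut}(G,+)$ as permutation groups on the common underlying set $G$.

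The second step is then immediate: since the RCC is a property of a permutation of the underlying set, an automorphism of the nearring $(G,+,\cdot)$ violates the RCC as a permutation of $G$ precisely when the corresponding additive automorphism does. By Theorem \ref{counterExTheo2} there exist finite non-RCC groups (for instance of order $120$), and by Remark \ref{splitRem} there are infinitely many of them. Applying the construction above to each such additive group produces a finite nearring with a non-RCC automorphism, i.e. a finite non-RCC nearring; since there are infinitely many suitable additive groups of distinct orders, we obtain infinitely many finite non-RCC nearrings.

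The main obstacle, such as it is, is purely definitional: one must verify that the chosen multiplication genuinely makes $(G,+,\cdot)$ a right nearring (associativity and one-sided distributivity) and, more importantly, that the nearring-automorphisms are \emph{exactly} the additive-group automorphisms, so that no spurious extra automorphisms appear and none are lost. The constant multiplication $a\cdot b:=a$ makes both verifications trivial, which is why I would prefer it over a multiplication twisted by a nontrivial endomorphism; the latter would force one to check which additive automorphisms commute with that endomorphism, needlessly complicating the correspondence. With the trivial multiplication the equality $\mathrm{Aut}(G,+,\cdot)=\mathrm{Aut}(G,+)$ holds on the nose, and the corollary follows with essentially no further work.
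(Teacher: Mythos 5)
Your proposal is correct and follows essentially the same route as the paper: the author also equips each finite non-RCC group with the trivial multiplication $g\cdot h:=g$ and observes that the nearring automorphisms then coincide with the group automorphisms, so Theorem \ref{counterExTheo2} yields infinitely many finite non-RCC nearrings. Your additional verification of the nearring axioms and the exact equality of automorphism groups is a fine, if slightly more explicit, rendering of the same argument.
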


\begin{proof}
This follows immediately from Theorem \ref{counterExTheo2} by observing that for any group $G$, if we add, to the group structure of $G$, the trivial nearring multiplication $g\cdot h:=g$, then the automorphisms of the corresponding nearring structure on $G$ are just the automorphisms of the underlying group structure.
\end{proof}

\section{Subgroups of RCC- and non-RCC-groups}

We conclude by showing that all finite groups occur as subgroups of RCC- and of finite non-RCC-groups. The second statement is immediate by what we have shown so far:

\begin{proposition}\label{nonOccEmbedProp}
Any finite group is a direct factor of (and so in particular embeds into) some finite non-RCC-group.
\end{proposition}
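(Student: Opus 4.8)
The plan is to exploit the stability of the non-RCC property under passage to direct products that was already isolated in point (1) of Lemma \ref{splitLem}. The key external input is simply the \emph{existence} of a finite non-RCC-group, which is guaranteed by Theorem \ref{counterExTheo2} (concretely, one may take $G_0:=\mathrm{SmallGroup}(120,8)$, whose non-RCC-ness is verified explicitly in Proposition \ref{counterExProp}). Once such a $G_0$ is fixed, the remaining argument is purely formal.

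Given an arbitrary finite group $G$, I would form the direct product $G\times G_0$. This is manifestly a finite group, and $G$ is a direct factor of it, with the canonical injection $g\mapsto(g,1)$ witnessing the desired embedding. It therefore remains only to check that $G\times G_0$ is non-RCC.

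For this, I would apply Lemma \ref{splitLem}(1) directly: since $G_0$ is a non-RCC direct factor of $G\times G_0$, the whole product $G\times G_0$ is non-RCC. Concretely, fixing a non-RCC-automorphism $\alpha_0$ of $G_0$, the automorphism $\mathrm{id}_G\times\alpha_0$ of $G\times G_0$ has exactly the same set of cycle lengths as $\alpha_0$ (each cycle of $\alpha_0$ on $G_0$ simply splits into $|G|$ disjoint copies), so it inherits the failure of the RCC from $\alpha_0$. This is precisely the phenomenon already recorded in Remark \ref{splitRem}.

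I expect no genuine obstacle here: all the substance lies in the prior existence result (Theorem \ref{counterExTheo2}) and in Lemma \ref{splitLem}(1), both of which are available. The only point worth stating cleanly is that being a direct factor yields an embedding, so the parenthetical \enquote{and so in particular embeds into} follows at once from the stronger direct-factor conclusion.
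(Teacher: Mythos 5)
Your proposal is correct and follows exactly the same route as the paper: take the direct product of $G$ with $\mathrm{SmallGroup}(120,8)$ (whose non-RCC-ness is Proposition \ref{counterExProp}) and invoke Lemma \ref{splitLem}(1) to conclude the product is non-RCC. No differences worth noting.
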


\begin{proof}
For any finite group $G$, by Lemma \ref{splitLem}(1) and Proposition \ref{counterExProp}, the direct product $\mathrm{SmallGroup}(120,8)\times G$ is a non-RCC group.
\end{proof}

The first statement, in turn, follows immediately from the following:

\begin{proposition}\label{occEmbedProp}
Let $n\in\mathbb{N}$. The symmetric group $\mathcal{S}_n$ satisfies the RCC.
\end{proposition}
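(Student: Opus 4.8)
The plan is to reduce the statement to inner automorphisms and then, for an automorphism realised as conjugation by some $\sigma\in\mathcal{S}_n$, to produce an element whose conjugacy cycle has the full length $\mathrm{ord}(\sigma)$. First I would clear the trivial cases $n\le 2$, where $\mathcal{S}_n$ admits only the identity automorphism (of order $1$), for which any fixed point is a regular cycle. For $n\ge 3$ the centre of $\mathcal{S}_n$ is trivial. I would next invoke Theorem~\ref{dominanceTheo}(2): whenever $\mathrm{ord}(\alpha)$ has at most two distinct prime divisors, $\alpha$ already satisfies the RCC, so I may assume $\mathrm{ord}(\alpha)$ has at least three distinct prime divisors. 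Since every element of $\mathrm{Aut}(\mathcal{S}_6)$ has order divisible by at most two primes (this follows from the exceptional structure $\mathrm{Aut}(\mathcal{S}_6)\cong\mathrm{P\Gamma L}_2(9)$, whose element orders all lie in $\{1,2,3,4,5,6,8,10\}$; alternatively one checks $\mathcal{S}_6$ directly, as the paper does around order $120$), the case $n=6$ never enters this regime and is thus already settled. Hence I may assume $n\neq 6$, so that $\mathrm{Aut}(\mathcal{S}_n)=\mathrm{Inn}(\mathcal{S}_n)$ and $\alpha$ is conjugation $\iota_\sigma$ by some $\sigma$, with $\mathrm{ord}(\alpha)=\mathrm{ord}(\sigma)=:m$ because the centre is trivial. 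Writing $\sigma$ as a product of disjoint cycles, its cycle lengths have least common multiple $m$; carrying at least three distinct primes requires cycles whose lengths are divisible by three distinct primes $p<q<r$ with $p\ge 2,q\ge 3,r\ge 5$, so $\sigma$ must move at least $2+3+5=10$ points, i.e. $n\ge 10$.

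Now the cycle of $\alpha$ through an element $\tau\in\mathcal{S}_n$ is precisely the conjugacy orbit $\{\sigma^k\tau\sigma^{-k}\}$, whose length is the index in $\langle\sigma\rangle$ of the stabiliser $\langle\sigma\rangle\cap\mathrm{C}(\tau)$, where $\mathrm{C}(\tau)$ denotes the centraliser of $\tau$. Thus $\tau$ lies on a regular cycle exactly when $\langle\sigma\rangle\cap\mathrm{C}(\tau)=1$. Since $\langle\sigma\rangle$ is cyclic, this intersection is trivial if and only if it contains no subgroup of prime order; and as $\sigma^{m/p}$ has order exactly $p$ and generates the unique subgroup of order $p$ of $\langle\sigma\rangle$, it suffices to find a single $\tau$ with $\sigma^{m/p}\notin\mathrm{C}(\tau)$ for every prime $p\mid m$, i.e. $\tau\notin\bigcup_{p\mid m}\mathrm{C}(\sigma^{m/p})$.

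I would finish by a pigeonhole argument, using the elementary bound $|\mathrm{C}(\beta)|\le n\cdot(n-2)!$ for every $\beta\neq\mathrm{id}$: fixing $a$ with $b:=\beta(a)\neq a$, any $\gamma$ commuting with $\beta$ satisfies $\gamma(b)=\beta(\gamma(a))$, so $\gamma(b)$ is forced by the value $\gamma(a)$, whence $\gamma$ is determined by at most $n$ choices for $\gamma(a)$ together with a bijection of the remaining $n-2$ points. Each prime dividing $m$ divides some cycle length and is therefore at most $n$, so there are at most $\pi(n)$ such primes, where $\pi$ is the prime-counting function. Consequently the number of "bad" $\tau$ is at most $\pi(n)\cdot n\cdot(n-2)!$, and since $\pi(n)<n-1$ for $n\ge 5$ (a fortiori for $n\ge 10$) this is strictly less than $n(n-1)(n-2)!=|\mathcal{S}_n|$. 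Hence a good $\tau$ exists, and it lies on a regular cycle. The main obstacle I anticipate is the honest treatment of $n=6$: the reduction above renders it harmless only once one knows that no automorphism of $\mathcal{S}_6$ has order divisible by three distinct primes, which rests on the exceptional structure of $\mathrm{Aut}(\mathcal{S}_6)$ and is the one point where a genuinely group-specific input, rather than the uniform counting estimate, is required.
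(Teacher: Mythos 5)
Your proof is correct, but it takes a genuinely different route from the paper's. The paper disposes of the statement in two lines: for $n\le 4$ the group order is at most $24<120$, so Theorem~\ref{counterExTheo} applies, and for $n\ge 5$ the group $\mathcal{S}_n$ has no nontrivial normal solvable subgroup (its only proper nontrivial normal subgroup is the alternating group), so Horo\v{s}evski\u{\i}'s Theorem~1 from \cite{Hor74a} applies directly. You instead combine Theorem~\ref{dominanceTheo}(2) with the completeness of $\mathcal{S}_n$ for $n\ne 6$ to reduce to conjugation $\iota_\sigma$ with $m:=\mathrm{ord}(\sigma)$ divisible by at least three distinct primes (which forces $n\ge 10$), observe that $\tau$ lies on a regular cycle of $\iota_\sigma$ if and only if $\langle\sigma\rangle\cap\mathrm{C}(\tau)=1$, i.e.~if and only if $\tau$ avoids the centralizers $\mathrm{C}(\sigma^{m/p})$ for the primes $p\mid m$, and then beat the union bound using $|\mathrm{C}(\beta)|\le n\cdot(n-2)!$ for $\beta\ne\mathrm{id}$ and $\pi(n)<n-1$. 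All of these steps check out, including the treatment of $n=6$: since $\mathrm{Out}(\mathcal{S}_6)$ has order $2$, every $\alpha\in\mathrm{Aut}(\mathcal{S}_6)$ has $\alpha^2$ inner, hence of order in $\{1,\ldots,6\}$, so $\mathrm{ord}(\alpha)$ has at most two prime divisors --- this is perhaps a cleaner way to see your claim than quoting the element orders of $\mathrm{P\Gamma L}_2(9)$. What the paper's route buys is brevity, at the cost of importing Horo\v{s}evski\u{\i}'s nontrivial theorem on groups without normal solvable subgroups; what yours buys is self-containedness relative to the present paper (only Theorem~\ref{dominanceTheo} plus the classical fact $\mathrm{Aut}(\mathcal{S}_n)=\mathrm{Inn}(\mathcal{S}_n)$ for $n\ne 6$ are needed) together with a quantitative byproduct --- for $n\ge 10$ all but a vanishing proportion of elements $\tau$ lie on regular cycles of $\iota_\sigma$ --- which is in the spirit of the large-cycle considerations of Section~3.
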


\begin{proof}
For $n\leq 4$, this is clear by Theorem \ref{counterExTheo}, so we may assume that $n\geq 5$. In this case, the result follows immediately from \cite[Theorem 1]{Hor74a}, since $\mathcal{S}_n$ has no nontrivial normal solvable subgroups.
\end{proof}

\section{Acknowledgements}

The author would like to thank Peter Hellekalek for his helpful comments and Michael Giudici for pointing out the paper \cite{GPS15a} to him.

\end{document}